\tikzset{%
    symbol/.style={%
        draw=none,
        every to/.append style={%
            edge node={node [sloped, allow upside down, auto=false]{$#1$}}}
    }
}
\newcommand{\Set}{\ensuremath{\mathrm{\textbf{Set}}}}
\newcommand{\Cat}{\mathbf{Cat}} 
\newcommand{\Pol}{\mathbf{Pol}} 
\newcommand{\C}{\ensuremath{\mathcal{C}}} 
\newcommand{\D}{\ensuremath{\mathcal{D}}} 
\newcommand{\W}{\ensuremath{\mathcal{W}}} 
\newcommand{\T}{\ensuremath{\mathcal{T}}} 
\newcommand{\G}{\ensuremath{\mathcal{G}}} 
\newcommand{\sD}{\ensuremath{\mathbb{D}}} 
\newcommand{\cc}{\ensuremath{\boldsymbol{\mathrm{c}}}} 
\newcommand{\ii}{\ensuremath{\boldsymbol{\mathrm{i}}}} 
\newcommand{\wt}[1]{\ensuremath{\widetilde{#1}}} 
\theoremstyle{plain}
\newtheorem{theorem}{Theorem}[section]
\newtheorem{lemma}[theorem]{Lemma}
\newtheorem{proposition}[theorem]{Proposition}
\newtheorem{corollary}[theorem]{Corollary}
\theoremstyle{definition}
\newtheorem{definition}[theorem]{Definition}
\newtheorem{remark}[theorem]{Remark}
\newtheorem{paragr}[theorem]{}
\newtheorem{example}[theorem]{Example}
\newtheorem{counterexample}[theorem]{Counter-Example}
\theoremstyle{plain}
\newtheorem*{theorem*}{Theorem} 
\newtheorem{theoremintro}{Theorem} 
\theoremstyle{definition}
\newtheorem*{definition*}{Definition} 
\title{Polygraphs and discrete Conduch\'e $\omega$-functors}
\author{L\'eonard Guetta}
\date{\today}
\subjclass[2010]{18D05}
\email{guetta@irif.fr}
\address{IRIF, Universit\'e Paris Diderot, Case 7014, 75205 Paris CEDEX 13}
\begin{document}

\begin{abstract}
 We define a class of morphisms between strict $\omega$-categories called \emph{discrete Conduché $\omega$-functors} that generalize discrete Conduché functors between $1$-categories and we study their properties related to polygraphs. The main result we prove is that for every discrete Conduché $\omega$-functor $f : C \to D$, if $D$ is a free strict $\omega$-category on a polygraph then so is $C$.  
  \end{abstract}

\maketitle

\section*{Introduction}
In \cite[Theorem 4.4]{giraud1964methode}, Giraud introduced necessary and sufficient conditions for a functor $f : C \to D$ to be exponentiable in the category of (small) categories $\Cat$, i.e. such that the pullback functor
\[
f^{\ast} : \Cat/D \to \Cat/C
\]
induced by $f$ admits a right adjoint. A functor satisfying these conditions is usually called a \emph{Conduché functor} or \emph{Conduché fibration} (named after Conduché who rediscovered Giraud's theorem in \cite{conduche1972sujet}). In the present article, we will focus on a particular case of this notion.
\begin{definition*}
  A functor $f : C \to D$ is a \emph{discrete Conduché functor} (or \emph{discrete Conduché fibration}) if for every arrow $\gamma : x \to y$ in $C$ and every factorization
  \[
  f(\gamma) = 
  \begin{tikzcd}
  f(x) \ar[r,"\alpha"] & z \ar[r,"\beta"] & f(y),
  \end{tikzcd}
  \]
there exists a \emph{unique} factorization
    \[
    \gamma =
    \begin{tikzcd}
  x \ar[r,"\overline{\alpha}"] & \overline{z} \ar[r,"\overline{\beta}"] & y
  \end{tikzcd}
    \]
    such that $f(\overline{\alpha})=\alpha$ and $f(\overline{\beta})=\beta$.
\end{definition*}
Recall that a category $C$ is free on a graph $G$ if \[C \simeq L(G)\] where $G$ is a graph and $L$ is the left adjoint to the forgetful functor from $\Cat$ to the category of graphs.

It was remarked in \cite{street1996categorical} that discrete Conduché functors, called \emph{ulf functors} there, have some properties related to free categories on graphs. For example, the following theorem follows immediatly from the first section of op.~cit.
\begin{theorem*}
  Let $f : C \to D$ be a discrete Conduché functor. If $D$ is free on a graph then $C$ is free on a graph.
\end{theorem*}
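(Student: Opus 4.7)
My plan is to use the length-function characterization of free categories that underlies the first section of \cite{street1996categorical}. This characterization says that a small category $C$ is free on a graph if and only if it admits a functor $\ell : C \to \mathbb{N}$, where $\mathbb{N}$ denotes the additive monoid of natural numbers regarded as a one-object category, such that $\ell$ is itself a discrete Conduché functor and $\ell^{-1}(0)$ consists only of the identities of $C$. The generators are then recovered as $\ell^{-1}(1)$; conversely, given a free category, sending each morphism to the length of its unique decomposition into generators provides such a functor $\ell$.

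Granting this, the proof is essentially formal. I would pick a witness $\ell_D : D \to \mathbb{N}$ for the freeness of $D$ and set $\ell_C := \ell_D \circ f : C \to \mathbb{N}$. I would then verify that $\ell_C$ is a witness for the freeness of $C$ via two routine checks. First, the composition of two discrete Conduché functors is again discrete Conduché: any factorization of $\ell_C(\gamma) = \ell_D(f(\gamma))$ in $\mathbb{N}$ lifts uniquely through $\ell_D$ to a factorization of $f(\gamma)$ in $D$, which then lifts uniquely through $f$ to a factorization of $\gamma$ in $C$. Second, $\ell_C$ reflects identities because both $\ell_D$ and $f$ do.

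The only substantive point here is the lemma that every discrete Conduché functor reflects identities, and this is where I expect the main, albeit small, obstacle to lie. Given $\gamma : x \to y$ in $C$ with $f(\gamma) = \id_{f(x)}$, the two composites $\gamma = \gamma \circ \id_x$ and $\gamma = \id_y \circ \gamma$ are both lifts of the trivial factorization $\id_{f(x)} = \id \circ \id$ of $f(\gamma)$ in $D$. The uniqueness clause in the definition of a discrete Conduché functor forces these lifts to coincide, including their middle objects; hence $x = y$ and $\gamma = \id_x$. All remaining steps are short formal consequences of this lemma together with the characterization above.
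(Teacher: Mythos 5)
Your argument is correct, and it is essentially the proof the paper has in mind for this statement: the paper offers no proof of its own here but defers to the first section of \cite{street1996categorical}, whose content is exactly the characterization you invoke (a category is free on a graph if and only if it admits a discrete Conduch\'e functor to the additive monoid $\mathbb{N}$ whose fibre over $0$ contains only identities), after which the theorem reduces to the two formal facts you check --- closure of discrete Conduch\'e functors under composition, and reflection of identities, the latter proved correctly by playing the two lifts of $\mathrm{id}\circ\mathrm{id}$ against each other. It is worth noting, though, that this is \emph{not} the route the paper takes when it actually proves the generalization to $\omega$-categories (Theorem \ref{maintheorem}): there is no obvious analogue of the length functor to $\mathbb{N}$ for polygraphs, so the paper instead works directly with the candidate basis $f^{-1}(\Sigma^D)$, encoding decompositions as well-formed words and showing that a discrete Conduch\'e functor induces isomorphisms of the graphs $\G[\Sigma^C]_a \to \G[\Sigma^D]_{f(a)}$ whose $0$-connectedness detects freeness. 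Specialized to dimension $1$, that amounts to lifting the unique decomposition of $f(\gamma)$ into generators factor by factor, which is more hands-on but generalizes; your argument is cleaner in dimension $1$ but does not. The one point you take on faith is the characterization itself --- acceptable here since the paper cites it rather than proving it, but it is where all the real content of the $1$-dimensional statement lives.
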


In the setting of strict $\omega$-categories, that we shall simply call \emph{$\omega$-categories}, the notion of free category on a graph can be generalized to the notion of \emph{free $\omega$-category on a polygraph} in the terminology of \cite{burroni1993higher} (or free $\omega$-categories on a \emph{computad} in the terminology of \cite{street1976limits} or \cite{makkai2005word}).

In the present paper we shall introduce a notion of discrete Conduché functor between $\omega$-categories and prove the following generalization of the previous theorem.
\begin{theoremintro}\label{theorem:mainintro}
  Let $C$ and $D$ be $\omega$-categories and $f : C \to D$ be a discrete Conduché $\omega$-functor. If $D$ is free on a polygraph then $C$ is free on a polygraph.
  \end{theoremintro}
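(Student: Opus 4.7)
My plan is to argue by induction on the dimension, building a polygraph structure on $C$ level by level from the polygraph $P$ on which $D$ is free. The guiding intuition comes from the $1$-dimensional case: in a free category on a graph, the generating edges are characterized as those non-identity arrows that admit no non-trivial factorization, and this ``indecomposability'' is both preserved and reflected by a discrete Conduché functor. The higher-dimensional Conduché condition ought to play exactly the same role with respect to each of the compositions $*_i$.

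First, I would formulate and prove an intrinsic characterization of the generating $n$-cells inside a free $\omega$-category on a polygraph $P$: an $n$-cell is a generator if and only if it is not an identity and admits no non-trivial factorization with respect to any of the compositions $*_i$ for $i<n$. Using this, I would verify that $f$ reflects indecomposability: the discrete Conduché condition, applied to each $*_i$ in turn, forces a unique lift of any factorization in $D$ to a factorization in $C$, so an $n$-cell $\gamma\in C$ is indecomposable if and only if $f(\gamma)$ is. This is the direct analogue of the observation that underlies the $1$-dimensional statement recalled in the introduction.

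With that in hand, I would construct the candidate polygraph $Q$ on $C$ inductively. Assume one has an $n$-polygraph $Q_{\leq n}$ together with an isomorphism from its free $n$-category to the $n$-truncation $C_{\leq n}$ compatible with $f$ and $P$. Define $Q_{n+1}$ to be the set of indecomposable $(n+1)$-cells of $C$, equivalently those $(n+1)$-cells whose images under $f$ belong to the generators $P_{n+1}$. Source and target maps come from $C$ and, by the induction hypothesis, land in the free $n$-category on $Q_{\leq n}$. The remaining task at dimension $n+1$ is to produce the isomorphism: one direction is given by the universal property, while the inverse is obtained by lifting the unique normal-form decomposition of $f(\gamma)$ as a formal composite of generators of $P$ to a decomposition of $\gamma$ in $C$ via iterated applications of the Conduché property.

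The main obstacle, I expect, lies precisely in that last step: showing that every $(n+1)$-cell of $C$ admits a \emph{unique} decomposition into generators of $Q_{n+1}$, and that two formal composites agreeing in $C$ already agree in the free $\omega$-category on $Q$. The difficulty is that the compositions $*_i$ interact through the exchange law, so the lifting of factorizations must be performed coherently across all $i$. I anticipate that the cleanest packaging is via an explicit normal form or tree decomposition of cells in the free $\omega$-category on $P$, after which the Conduché condition can be shown to transport such decompositions from $D$ to $C$ bijectively, yielding both existence and uniqueness in one stroke.
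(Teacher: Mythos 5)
Your overall skeleton agrees with the paper's: induct on dimension, take as candidate $(n+1)$-generators the preimages $f^{-1}(\Sigma^D_{n+1})$ of the generators of $D$ (equivalently, by the reflection of indecomposability, the indecomposable $(n+1)$-cells of $C$), and then show this set is a basis. The reflection of indecomposability is indeed proved in the paper (Lemma \ref{lemmaconducheindecomposablecells}), and the characterization of generators as indecomposable cells is Proposition \ref{propositionbasisindecomposablecells} (cited from Makkai). But the step you yourself flag as the main obstacle is precisely where your plan does not go through as stated: there is no ``unique normal-form decomposition of $f(\gamma)$ as a formal composite of generators'' to lift. In a free $\omega$-category on a polygraph, a cell of dimension $\geq 2$ has many formal decompositions into generators, and they are equal only modulo the congruence generated by associativity, units and the exchange law; choosing a canonical representative is a delicate combinatorial problem (this is essentially the word problem for polygraphs), and ``uniqueness in one stroke'' cannot be extracted from lifting a single chosen composite. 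In particular, the injectivity of the canonical map $\Sigma^{C*}_{n+1}\to C_{n+1}$ requires showing that \emph{any two} formal decompositions of a cell $\gamma$ of $C$ that evaluate to the same element are already identified in the free category on your candidate basis, and lifting one normal form says nothing about the others.

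The paper resolves this by replacing ``normal form'' with the whole rewriting graph of decompositions. For each cell $a$ it considers the graph $\G[\Sigma]_a$ whose vertices are all well-formed words evaluating to $a$ and whose edges are elementary movements (associativity, unit, exchange rewrites); being a basis is equivalent to all these graphs being non-empty and connected (Proposition \ref{propositionbasis0connected}). The Conduché condition is then shown to be \emph{equivalent} to $\wt{f}_a$ being a bijection on vertices (Proposition \ref{propositionequivalentconditionsconduche}), and elementary movements are shown to lift uniquely (Lemma \ref{technicallemma}, Corollary \ref{corollaryisomorphismgraphs}), so $\G[\Sigma^C]_a\cong\G[\Sigma^D]_{f(a)}$ and $0$-connectedness transports from $D$ to $C$. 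This gives existence and uniqueness simultaneously without ever choosing a normal form. To repair your argument you would need to prove something of this strength: that \emph{every} decomposition downstairs lifts, that the lift is unique, and that the elementary rewrites relating two decompositions downstairs also lift, so that the lifted decompositions remain identified upstairs.
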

We will even be more precise and explicitly construct the polygraph generating $C$ from the one generating $D$.
As a by-product we will also prove the following theorem.
\begin{theoremintro}
  Let $C$ and $D$ be $\omega$-categories and $f : C \to D$ be a discrete Conduché $\omega$-functor. If $C$ is free on a polygraph and $f_n : C_n \to D_n$ is surjective for every $n\in \mathbb{N}$, then $D$ is free on a polygraph.
\end{theoremintro}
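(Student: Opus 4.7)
The plan is to build a polygraph $Q$ for $D$ explicitly and show $D \simeq Q^\ast$, proceeding by induction on dimension. I would set $Q_0 = D_0$ (which equals $f_0(P_0)$ by surjectivity of $f_0$), and then, assuming inductively that $D_{<n} \simeq Q_{<n}^\ast$ as $(n-1)$-categories, define
\[
Q_n \;=\; \bigl\{\, f(\bar{p}) : \bar{p} \in P_n \,\bigr\} \setminus \{\text{identities}\} \;\subseteq\; D_n,
\]
with source and target maps inherited from $D$, which land in $Q_{<n}^\ast$ by the inductive hypothesis.

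The crucial structural property to establish is \emph{indecomposability} of $Q_n$: every $q = f(\bar{p}) \in Q_n$ satisfies that any factorization $q = \alpha \ast_i \beta$ in $D$ (for $i < n$) has $\alpha$ or $\beta$ an identity. This follows directly from the discrete Conduché property: such a factorization lifts to a factorization $\bar{p} = \bar\alpha \ast_i \bar\beta$ in $C$, which must be trivial since $\bar{p}$ is a generator of $P$. This is what allows $Q_n$ to serve as the polygraphic generators of $D_n$ rather than some larger set.

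The inductive step then requires showing that $D_n$ is freely generated, as an $\omega$-category over $D_{n-1} \simeq Q_{<n}^\ast$, by the set $Q_n$. Surjectivity of the canonical map from the free extension to $D_n$ is straightforward: for $\delta \in D_n$, pick a lift $\bar\delta \in C_n$ using surjectivity of $f_n$, write $\bar\delta$ in its unique normal form as a composite of $P$-generators in $C = P^\ast$, and apply $f$ to obtain an expression of $\delta$ as a composite of $Q$-generators and identities.

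The main obstacle is injectivity at the generator-composition level: if two distinct polygraphic expressions $x$ and $y$ using generators in $Q_n$ over $D_{n-1}$ yield the same $\delta \in D_n$, one must show they are equal. The plan is to lift both expressions, via an iterated use of discrete Conduché, to factorizations in $C$ of a chosen lift $\bar\delta$ of $\delta$, and then use uniqueness of the polygraphic normal form in $C = P^\ast$ to force these decompositions to coincide, whence $x = y$ after applying $f$. The delicate technical point is that a priori different choices of lift $\bar\delta$ might produce different expressions in $Q^\ast$; establishing independence from the lift (or, equivalently, that the decomposition descends canonically) will likely rely on the explicit construction of the polygraph obtained in the proof of Theorem~\ref{theorem:mainintro}, which provides the compatibility between decompositions in $C$ and in $D$.
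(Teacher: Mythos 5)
Your strategy is essentially the paper's: take as candidate basis the image $f(\Sigma^C_n)$ of the basis of $C$ (equivalently, by the lemma that a cell is indecomposable iff its image is, the set of indecomposable cells of $D$), get surjectivity of $Q_n^\ast\to D_n$ by lifting a cell along $f_n$ and pushing a generating expression down, and get injectivity by lifting expressions. The paper packages the last two steps as the statement that for $\Sigma^D\subseteq D_{n+1}$ and $\Sigma^C=f^{-1}(\Sigma^D)$ the induced morphism $\G[\Sigma^C]_a\to\G[\Sigma^D]_{f(a)}$ of graphs of well-formed words and elementary moves is an isomorphism, so $0$-connectedness transfers in both directions. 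Two points in your injectivity step need repair. First, there is no ``unique polygraphic normal form'' in $P^\ast$: a cell of a free $\omega$-category has many expressions in the generators, related by associativity, unit and interchange moves, and the lifted decompositions cannot literally coincide when $x\neq y$ (the lift of words over a fixed $\bar\delta$ is injective). What you actually obtain is that the two lifted words are \emph{equivalent}, i.e.\ joined by a zigzag of elementary moves because they represent the same cell $\bar\delta$ of the free category $C$; to conclude $x\sim y$ in $Q^\ast$ (which is what injectivity requires, not $x=y$) you must check that the map induced by $f$ on words sends elementary moves to elementary moves, which is the paper's Lemma \ref{technicallemma}/\ref{lemmafaithful} machinery. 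Second, the ``delicate point'' you flag --- independence of the chosen lift $\bar\delta$ --- is not actually needed: fix any one lift and lift both words over it. What you do need and do not state is that every cell of $C$ lying over a generator in $Q_{n+1}$ is itself a generator of $P$ (so that lifted letters are again letters); this is the identity $f^{-1}(f(\Sigma^C_{n+1}))=\Sigma^C_{n+1}$, which again follows from the indecomposability lemma and is exactly the point the paper checks before invoking its proposition.
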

Note also that, as one would expect, discrete Conduché $\omega$-functors are exponentiable morphisms in the category of $\omega$-categories. However the proof of that fact goes beyond the scope of this paper.

The original motivation for the present paper comes from a seemingly unrelated topic. Let $\D(\mathbb{Z})$ be the localization of the category of chain complexes with respect to quasi-isomorphisms and $\Cat_{\omega}$ the category of $\omega$-categories and $\omega$-functors. In \cite{metayer2003resolutions}, Métayer defines a functor
\[
H^{\mathrm{pol}}(-) : \Cat_{\omega} \to \D(\mathbb{Z})
\]
called the \emph{polygraphic homology functor} by means of a so-called \emph{polygraphic resolution}. As it turns out, free $\omega$-categories on polygraphs are the cofibrant objects of a ``folk'' model structure on $\Cat_{\omega}$ and the polygraphic homology functor can be understood as the left derived functor of a well-known abelianization functor (see \cite{metayer2003resolutions},\cite{metayer2008cofibrant} and \cite{lafont2010folk}).

In \cite{lafont2009polygraphic}, Lafont and Métayer prove that when we restrict this functor to the category of monoids, considered as a subcategory of $\Cat$ and hence of $\Cat_{\omega}$, then it is isomorphic to the ``classical'' homology functor of monoids (which can be defined as the singular homology of the classifying space of the monoid).

While extending the previous result from monoids to $1$-categories \cite{guettapolygraphic}, I encountered the following question:

Let $ f : P \to C$ be an $\omega$-functor with $P$ a free $\omega$-category on a polygraph, $C$ a $1$-category and let $c$ be an object of $C$. Consider the $\omega$-category $P/c$ defined as the following fibred product in $\Cat_{\omega}$:
\[
\begin{tikzcd}
  P/c \ar[d] \ar[r] \ar[dr, phantom, "\lrcorner", very near start] & P \ar[d,"f"] \\
  C/c \ar[r] &C
  \end{tikzcd}
\]
where the anonymous arrow from the slice category $C/c$ to $C$ is the obvious forgetful functor. 
\begin{center}
  Question: Is $P/c$ free on a polygraph?
\end{center}
Now, it is straightforward to check that the arrow $C/c \to C$ is a discrete Conduché functor. Moreover, as we shall see, discrete Conduché functors are stable by pullback. Hence, the arrow $P/c \to P$ is a discrete Conduché $\omega$-functor. Then Theorem \ref{theorem:mainintro} provides a positive answer to the previous question.

The same strategy also yields an alternative proof of Proposition 6 of \cite{lafont2009polygraphic}. It suffices to notice that the so-called ``unfolding'' of an $\omega$-functor  \[f : P \to M,\] where $M$ is a monoid (definition 13 of op.~cit.) is just the category $P/\star$, with $\star$ the only object of $M$ when seen as a category.
\section*{Acknowledgements}
I am deeply grateful to Georges Maltsiniotis for his help and support. Any attempt to sum it up in a few words would be reductive. I am also grateful to François Métayer and Chaitanya Leena-Subramaniam for their careful reading of this article that helped improve it a lot. Finally, I would like to thank the anonymous referee for their very useful remarks and suggestions.
\section{$\omega$-categories}
This section is mainly devoted to fixing notations. Some facts are asserted and proofs are left to the reader.
\begin{paragr} An \emph{$\omega$-graph} consists of
  \begin{itemize}
  \item[-] a sequence $(C_n)_{n \in \mathbb{N}}$ of sets,
  \item[-] maps $s^k_n,t^k_n : C_n\to C_k$ for all $k< n \in \mathbb{N}$,
    \end{itemize}
    subject to the \emph{globular identities}
    \[
    s^l_n = s^l_k\circ s^k_n = s^l_k\circ t^k_n,
    \]
    \[
    t^l_n = t^l_k\circ t^k_n = t^l_k\circ s^k_n,
    \]
    whenever $l < k < n \in \mathbb{N}$. When the context is clear, we often write $s^k$ (resp.\ $t^k$) instead of $s^k_n$ (resp.\ $t^k_n$).
    
    Elements of $C_n$ are called \emph{$n$-cells}. For an $n$-cell $x$ and $k < n$, $s^k(x)$ is its \emph{$k$-source} and $t^k(x)$ its \emph{$k$-target}. When $n>0$, we use $s(x)$ (resp.\ $t(x)$) as a synonym for $s^{n-1}_n(x)$ (resp. $t^{n-1}_n(x)$). 

    Two $n$-cells $x$ and $y$ are \emph{parallel} if $n=0$ or $n>0$ and
    \[s(x)=s(y) \text{ and }t(x)=t(y).\]

    We define the set $C_n\times_{C_k}C_n$ as the following fibred product
    \[
    \begin{tikzcd}
      C_n\times_{C_k}C_n \ar[r] \ar[dr,phantom,"\lrcorner", very near start] \ar[d] &C_n \ar[d,"t^k"]\\
      C_n \ar[r,"s^k"] & C_k.
      \end{tikzcd}
    \]
    That is, elements of $C_n\times_{C_k}C_n$ are pairs $(x,y)$ of $n$-cells such that $s^k(x)~=~t^k(y)$. We say that two $n$-cells $x$ and $y$ are \emph{$k$-composable} if the pair $(x,y)$ belongs to $C_n\times_{C_k}C_n$. 
    \end{paragr}
\begin{paragr}
      Given two $\omega$-graphs $C$ and $D$, a \emph{morphism of $\omega$-graphs}
  \[
  f : C \to D
  \]
  is a sequence of maps
  \[
  (f_n : C_n \to D_n)_{n\in \mathbb{N}}
  \]
  such that, for all $k<n \in \mathbb{N}$, both diagrams
  \[
  \begin{tikzcd}
    C_n \ar[r,"f_n"] \ar[d,"s^k"] & D_n \ar[d,"s^k"] \\
    C_k \ar[r,"f_k"] & D_k
  \end{tikzcd}
    \begin{tikzcd}
    C_n \ar[r,"f_n"] \ar[d,"t^k"] & D_n \ar[d,"t^k"] \\
    C_k \ar[r,"f_k"] & D_k
    \end{tikzcd}
    \]
    are commutative.
  \end{paragr}
\begin{paragr}\label{defomegacat} An \emph{$\omega$-category} consists of an $\omega$-graph $C$ together with maps
  \[
  \ast_k^n : C_n\times_{C_k} C_n \to C_n
  \]
  \[
  1^n_k : C_k \to C_n
  \]
  for each pair $k<n$ subject to the following axioms.

  \textbf{Source and target axioms:}
  \begin{enumerate}
  \item For all $l\leq k < n \in \mathbb{N}$, for all $k$-composable $n$-cells $x$ and $y$, we have
    \[
        s^l(x\ast_k^n y)=s^l(x),
    \]
    \[
    t^l(x\ast_k^n y)=t^l(x).
    \]
  \item For all $ k <l<n \in \mathbb{N}$,  for all $k$-composable $n$-cells $x$ and $y$, we have
    \[
 s^l(x\ast_k^n y)=s^l(x)\ast_k^l s^l(y),
    \]
    \[
t^l(x\ast_k^n y)=t^l(x)\ast_k^lt^l(y).
\]
\item For all $k < n \in \mathbb{N}$, for every $k$-cell $x$, we have
  \[
  s^k(1^n_k(x))=x=t^k(1^n_k(x)).
  \]
  \end{enumerate}
  
  \textbf{Unit axioms:}
  \begin{enumerate}[resume]
  \item For all $l<k<n \in \mathbb{N}$,
    \[
    1^n_k\circ1^k_l = 1^n_l.
    \]
      \item For all $k<n \in \mathbb{N}$, for every $n$-cell $x$, we have
    \[
    x\ast_k^n1^n_k(s^k(x))=x=1^n_k(t^k(x))\ast_k^nx.
    \]
  \item For all $k<l<n \in \mathbb{N}$, for all $k$-composable $n$-cells $x$ and $y$, we have 
    \[
    1^n_l(x\ast_k^ly)=1^n_l(x)\ast^n_k1^n_l(y).
    \]
  \end{enumerate}

  \textbf{Associativity axiom:}
  \begin{enumerate}[resume]
\item For all $k<n \in \mathbb{N}$, for all $n$-cells $x,y$ and $z$ such that $x$ and $y$ are $k$-composable, and $y$ and $z$ are $k$-composable, we have
    \[
    (x\ast_k^n y)\ast_k^n z=x\ast_k^n(y \ast_k^n z).
    \]
  \end{enumerate}

  \textbf{Exchange law:}
  \begin{enumerate}[resume]
  \item For all $k < l < n \in \mathbb{N}$, for all $n$-cells $x,x',y$ and $y'$  such that
    \begin{itemize}
    \item[-] $x$ and $y$ are $l$-composable, $x'$ and $y'$ are $l$-composable,
    \item[-] $x$ and $x'$ are $k$-composable, $y$ and $y'$ are $k$-composable,
    \end{itemize}
    we have
    \[
    ((x \ast^n_k x')\ast^n_l (y \ast^n_k y'))=((x \ast^n_l y)\ast^n_k (x' \ast^n_l y')).
    \]
  \end{enumerate}
  The same letter will refer to an $\omega$-category and its underlying $\omega$-graph. We will almost always write $\ast_k$ instead of $\ast^n_k$, and, for an $n$-cell $x$, $1_x$ will sometimes be used as a synonym for $1^{n+1}_n(x)$.
  Moreover, for consistency, we set $1^n_n(x):=x$ for any $n$-cell $x$.
\end{paragr}
\begin{paragr}
  Let $C$ and $D$ be $\omega$-categories. An \emph{$\omega$-functor} is a morphism of $\omega$-graphs $f : C \to D$ that satisfies the following axioms:
  \begin{enumerate}
  \item For all $k < n \in \mathbb{N}$, for all $k$-composable $n$-cells $x$ and $y$, we have
    \[
    f_n(x \ast_k y) = f_n(x) \ast_k f_n(y).
    \]
  \item For all $k < n \in \mathbb{N}$, for every $k$-cell $x$, we have
    \[
    f_n(1^n_k(x))=1^n_k(f(x)).
    \]
    \end{enumerate}
  For an $n$-cell $x$, we will often write $f(x)$ instead of $f_n(x)$. The category of $\omega$-categories and $\omega$-functors is denoted by $\Cat_{\omega}$. 
\end{paragr}
\begin{paragr}
  Let $x$ be a $k$-cell in an $\omega$-category. We say that $x$ is \emph{degenerate} if there exists $x' \in C_{k'}$ with $k' < k$ such that
  \[
  x=1^k_{k'}(x').
  \]
  Note that $0$-cells are never degenerate.

\end{paragr}
\begin{paragr}\label{paragr:ncat}
  For $n \in \mathbb{N}$, an \emph{$n$-category} is an $\omega$-category such that every $k$-cell with $k > n$ is degenerate. An \emph{$n$-functor} is an $\omega$-functor between two $n$-categories. The category of $n$-categories and $n$-functors is denoted by $\Cat_n$. 

  There is an obvious inclusion functor
  \[
  \Cat_n \to \Cat_{\omega}.
  \]
  This functor has a left and a right adjoint. In the sequel, we shall only use the right adjoint, which will be denoted by
  \[
  \tau_{\leq n} : \Cat_{\omega} \to \Cat_n.
  \]
  For an $\omega$-category $C$, the $\omega$-category $\tau_{\leq n}(C)$ is obtained by removing all non-degenerate $k$-cells of $C$ such that $k>n$. 
\end{paragr}
\begin{remark}
  It follows from the axioms of $\omega$-categories and $\omega$-functors that, for $n$-categories and $n$-functors, everything involving $k$-cells such that $k>n$ can be recovered from the rest. For example, we will often consider that the data of an $n$-category $C$ only consists of 
  \begin{itemize}
  \item[-] $(C_k)_{0 \leq k \leq n}$,
  \item[-] $(s^{k}_l)_{0\leq k < l \leq n}$,
  \item[-] $(t^k_l)_{0\leq k < l \leq n}$,
  \item[-] $(\ast^l_k)_{0\leq k < l \leq n}$,
    \item[-] $(1^l_k)_{0\leq k < l \leq n}$.
  \end{itemize}
  \end{remark}
\begin{paragr}
  Let $n \in \mathbb{N}$. It follows from the definition of $\omega$-categories and $\omega$-functors that we have a functor
  \[
  \mathrm{Cell}_n : \Cat_{\omega} \to \Set
  \]
  that associates to each $\omega$-category $C$, the set $C_n$ of its $n$-cells and to an $\omega$-functor $f : C \to D $, the map $f_n : C_n \to D_n$.

  This functor is representable and we define the \emph{$n$-globe} $\sD_n$ to be the $\omega$-category representing this functor. ($\sD_n$ is in fact an $n$-category.) Here are some pictures in low dimension:
  \[
  \sD_0= \begin{tikzcd}\bullet\end{tikzcd},
  \]
  \[
  \sD_1 = \begin{tikzcd} \bullet \ar[r] &\bullet \end{tikzcd},
  \]
  \[
  \sD_2 = \begin{tikzcd}
    \bullet \ar[r,bend left=50,""{name = U,below}] \ar[r,bend right=50,""{name=D}]&\bullet \ar[Rightarrow, from=U,to=D]
  \end{tikzcd},
  \]
    \[
  \sD_3 = \begin{tikzcd}
    \bullet \ar[r,bend left=50,""{name = U,below,near start},""{name = V,below,near end}] \ar[r,bend right=50,""{name=D,near start},""{name = E,near end}]&\bullet \ar[Rightarrow, from=U,to=D, bend right,""{name= L,above}]\ar[Rightarrow, from=V,to=E, bend left,""{name= R,above}]
    \arrow[phantom,"\Rrightarrow",from=L,to=R]
  \end{tikzcd}.
  \]
  We will make no distinction between an $n$-cell $x$ and the $\omega$-functor 
  \[
  x : \sD_n \to C
  \]
  associated to it.

  For $k < n \in \mathbb{N}$, the arrows $s^k_n, t^k_n$ and $1^n_k$ induce natural tranformations
  \[
  \sigma^k_n,\tau^k_n : \mathrm{Cell}_n \Rightarrow \mathrm{Cell}_k
  \]
  and
  \[
  \kappa^n_k :  \mathrm{Cell}_k \Rightarrow \mathrm{Cell}_n.
  \]
  These natural transformations are in turn represented by  $\omega$-functors, that we denote with the same letters:
  \[
  \sigma^k_n, \tau^k_n : \sD_k \to \sD_n
  \]
  and
  \[
  \kappa^n_k : \sD_n \to \sD_k.
  \]
  For example, having  a commutative triangle
  \[
  \begin{tikzcd}
    \sD_n \ar[r,"x"] \ar[d,"\kappa^n_k"'] & C\\
    \sD_k \ar[ru,"y"']&
    \end{tikzcd}
  \]
  means exactly that we have an $n$-cell $x$ of $C$ and a $k$-cell $y$ of $C$ such that \[x=1^n_k(y).\]
\end{paragr}
\begin{paragr}
  Similarly, for $k < n \in \mathbb{N}$, we have a functor
  \[
  \mathrm{Comp}^n_k : \Cat_{\omega} \to \Set
  \]
  that associates to each $\omega$-category $C$ the set $C_n \times_{C_k} C_n$ and to an $\omega$-functor $f : C \to D$ the canonically induced map \[f_n \times_{f_k} f_n : C_n \times_{C_k} C_n \to D_n \times_{D_k} D_n.\]
  This functor is represented by the $\omega$-category \[\sD_n\coprod_{\sD_k}\sD_n\]
  (which is also an $n$-category) defined as the following amalgamated sum
  \[
  \begin{tikzcd}
    \sD_k \ar[r,"\sigma^k_n"] \ar[d,"\tau^k_n"] \ar[dr, phantom,"\ulcorner", very near end] & \sD_n \ar[d]\\
    \sD_n \ar[r]& \sD_n\coprod_{\sD_k}\sD_n.
    \end{tikzcd}
  \]
  The arrow $\ast^n_k$ induces a natural transformation
  \[
  \nabla^n_k : \mathrm{Comp}^n_k \to \mathrm{Cell}_n
  \]
  which in turn is represented by an $\omega$-functor
  \[
  \nabla^n_k : \sD_n \to \sD_n\coprod_{\sD_k}\sD_n.
  \]
  For example, having a commutative triangle
    \[
  \begin{tikzcd}
    \sD_n \ar[r,"x"] \ar[d,"\nabla^n_k"'] & C\\
    \sD_n\coprod_{\sD_k}\sD_n \ar[ru,"{(y, y')}"']&
    \end{tikzcd}
  \]
  means exactly that we have $n$-cells $x,y,y'$ of $C$ such that $y$ and $y'$ are $k$-composable and $x=y\ast_ky'$.
\end{paragr}
\section{Polygraphs}
\begin{definition}\label{defbasis}
  Let $C$ be an $\omega$-category and $\Sigma \subseteq C_k$ with $k \in \mathbb{N}$. We say that $\Sigma$ is a \emph{$k$-basis} if either $k = 0$ and
  \[
  \Sigma = C_0
  \]
  or $k > 0$ and the following universal property is satisfied: for every $k$-category $D$, every $(k- 1)$-functor \[ f : \tau_{\leq k-1}(C) \to \tau_{\leq k-1}(D)\] and every map \[ {\varphi : \Sigma \to D_k}\] such that for every $k$-cell $x$ of $C$, we have
  \[
  s(\varphi(x))=f_{k-1}(s(x)) \text{ and } t(\varphi(x))=f_{k-1}(t(x)),
  \]
  there exists a unique $k$-functor $f' : \tau_{\leq k}(C) \to D$ such that
  \[
  \tau_{\leq k-1}(f')=f
  \]
  and
  \[
  f'_k(x)=\varphi(x)
  \]
  for every $x \in \Sigma$.
\end{definition}
\begin{remark}
  Note that an $n$-category has a $k$-basis for any $k > n$, namely the empty set.
\end{remark}
\begin{definition}
  An $\omega$-category $C$ is \emph{free} if it has a $k$-basis for every $k \in \mathbb{N}$.
\end{definition}
\begin{definition}
  Let $C$ be an $\omega$-category and $n \in \mathbb{N}$. An $n$-cell $x$ of $C$ is \emph{indecomposable} if: 
  \begin{enumerate}
  \item $x$ is not degenerate,
    \item for any $0 \leq k <n $, if $x=x_1\ast_k x_2$ with $x_1,x_2 \in C_n$ then
    \[x_1=1^n_k(t^k(x))\]
    or
     \[x_2=1^n_k(s^k(x)).\]
    \end{enumerate}
\end{definition}
In particular, any $0$-cell is indecomposable. 
\begin{proposition}\label{propositionbasisindecomposablecells}
  Let $C$ be a free $\omega$-category. For each $k \in \mathbb{N}$, there is a unique $k$-basis of $C$, namely the set of indecomposable $k$-cells.
\end{proposition}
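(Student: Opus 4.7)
My plan is to proceed by induction on $k$. The base case $k=0$ is immediate since $\Sigma_0 = C_0$ is the only $0$-basis and every $0$-cell is, by convention, indecomposable. For the inductive step at $k = n+1$, I assume the result for dimensions $\leq n$ and show that any $(n+1)$-basis $\Sigma_{n+1}$ coincides with the set $\mathrm{Indec}_{n+1}(C)$ of indecomposable $(n+1)$-cells; the remaining direction of the ``iff'' then follows from the existence of some basis (guaranteed by freeness) combined with this equality.

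First I would prove $\mathrm{Indec}_{n+1}(C) \subseteq \Sigma_{n+1}$. Given an indecomposable $a \in C_{n+1}$, Proposition~\ref{propositionbasis0connected} provides $w \in \T[\Sigma_{n+1}]$ with $\rho_{\Sigma_{n+1}}(w) = a$. An induction on $|w|$ shows $a \in \Sigma_{n+1}$: if $|w|=0$, then $w = (\cc_\alpha)$ (giving $a = \alpha \in \Sigma_{n+1}$) or $w = (\ii_x)$ (impossible since $a$ is non-degenerate); if $|w| \geq 1$, write $w = (w_1 \ast_j w_2)$, and indecomposability of $a = [w_1] \ast_j [w_2]$ forces one factor to be the appropriate $j$-identity, leaving the other factor to represent $a$ with strictly smaller size, to which induction applies.

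The reverse inclusion $\Sigma_{n+1} \subseteq \mathrm{Indec}_{n+1}(C)$ is more delicate. Let $\alpha \in \Sigma_{n+1}$. Non-degeneracy follows from Remark~\ref{remarkinjectivitymapj}: if $\alpha = 1^{n+1}_n(y)$, then both $(\cc_\alpha)$ and $(\ii_y)$ lie in $\T[\Sigma_{n+1}]_\alpha$ but have different $\cc_\alpha$-counts, contradicting that $\Sigma_{n+1}$ is a basis. For the absence of non-trivial $j$-decompositions, suppose $\alpha = x_1 \ast_j x_2$; picking words $w_i$ with $\rho_{\Sigma_{n+1}}(w_i) = x_i$ gives $(w_1 \ast_j w_2) \sim (\cc_\alpha)$. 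Invariance of letter counts forces, without loss of generality, that $w_2$ contains no $\cc$-symbol, and a straightforward induction on $|w_2|$ yields $x_2 = 1^{n+1}_n(z)$ for some $z \in C_n$. When $j = n$ this immediately identifies $z = s^n(\alpha)$, hence $x_2 = 1^{n+1}_j(s^j(\alpha))$ and the decomposition is trivial.

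The main obstacle is the sub-case $j < n$, where one must further force $z = 1^n_j(s^j(\alpha))$. Here I would exploit the induced identity $s^n(\alpha) = s^n(x_1) \ast_j z$ in the free $n$-category $\tau_{\leq n}(C)$ and apply the inductive hypothesis of the proposition at dimension $n$ (giving that $\Sigma_n$ is the unique indecomposable basis) together with a parallel letter-counting analysis on a representative of $s^n(\alpha)$ in $\T[\Sigma_n]$, iterating the argument downward in dimension until the source of $\alpha$ is reduced to a $j$-cell and $z$ is pinned down as $1^n_j(s^j(\alpha))$. Once both inclusions are established at level $n+1$, the ``if'' direction is immediate: freeness provides some $(n+1)$-basis, which by the argument coincides with $\mathrm{Indec}_{n+1}(C)$, so the set of indecomposables is itself a basis.
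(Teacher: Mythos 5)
The first half of your argument is sound: the inclusion of the indecomposable cells into any $(n+1)$-basis (by induction on the size of a representing word), the non-degeneracy of basis elements via the $\cc$-count invariant of Remark \ref{remarkinjectivitymapj}, and the reduction of a putative decomposition $\alpha = x_1 \ast_j x_2$ to the case where (say) $x_2 = 1^{n+1}_n(z)$ for some $z \in C_n$ all check out. But the step you yourself flag as the main obstacle --- forcing $z = 1^n_j(s^j(\alpha))$ when $j < n$ --- is a genuine gap, and the strategy you propose for it does not work. The identity $s^n(\alpha) = s^n(x_1) \ast_j z$ is a perfectly legitimate decomposition of an ordinary (generally decomposable) $n$-cell of the free $n$-category $\tau_{\leq n}(C)$, and it is routinely satisfied with $z$ a non-identity: for a generating $2$-cell $\alpha : f \ast_0 h \Rightarrow g \ast_0 h$ one has $s^1(\alpha) = f \ast_0 h$ with $h$ a non-trivial $1$-cell. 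Knowing inductively that $\Sigma_n$ is the set of indecomposable $n$-cells places no constraint whatsoever on $z$, and no letter-counting on representatives of $s^n(\alpha)$ in $\T[\Sigma_n]$ can distinguish the situation ``$\alpha$ is a generator whose source happens to factor through $h$'' from ``$\alpha$ is genuinely a whiskering $\beta \ast_0 1^2_1(h)$'': the obstruction lives at the level of $(n+1)$-cells, i.e.\ in the structure of the relation $\sim$ on $\T[\Sigma_{n+1}]$, not at the level of sources, so iterating downward in dimension never produces a contradiction.

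What is missing is a finer invariant of $\sim$ than the count of $\cc$-symbols: one must show that in any word equivalent to $(\cc_{\alpha})$, the unique occurrence of a $\cc$-symbol cannot sit inside a non-trivial $\ast_j$-context for $j<n$ --- equivalently, that the elementary movements of Definition \ref{definitionelementarymovement} can only absorb a subword $(\ii_c)$ along $\ast_j$ when $c$ is itself a $j$-fold identity, and that no chain of movements manufactures such an identity out of a non-degenerate $c$. This contextual bookkeeping is precisely the content of the result the paper cites and deliberately does not reprove (\cite[section 4, proposition 8.3]{makkai2005word}). So your outline is not a different correct route but an incomplete one: either carry out this finer analysis of $\sim$, or, as the paper does, invoke Makkai's proposition.
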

\begin{proof}
 See \cite[Section 4, Proposition 8.3]{makkai2005word}.
\end{proof}
\begin{paragr}
Proposition \ref{propositionbasisindecomposablecells} allows us to talk about \emph{the} $k$-basis of a free category $C$. The sequence
  \[
  (\Sigma_k \subseteq C_k)_{k \in \mathbb{N}}
  \]
  where each $\Sigma_k$ is the $k$-basis of $C$ is simply called the \emph{basis} of $C$.
  \end{paragr}
\begin{definition}
  An $\omega$-functor $f : C \to D$ between two free $\omega$-categories is \emph{rigid} if for every $k \in \mathbb{N}$, we have
  \[
  f_k(\Sigma_k^C)\subseteq \Sigma_k^D
  \]
  where $\Sigma^C_k$ (resp.\ $\Sigma_k^D$) is the $k$-basis of $C$ (resp.\ $D$).
\end{definition}
Free $\omega$-categories and rigid $\omega$-functors form a category denoted by $\Pol$.
\begin{remark}
  Objects of $\Pol$ are commonly called \emph{polygraphs} and morphisms of $\Pol$ are commonly called \emph{morphisms of polygraphs}. Although the terms ``polygraph'' and ``free $\omega$-category'' are synonyms, we prefer to use the former one when we think of them as objects of the category $\Pol$ and the latter one when we think of them as objects of the category $\Cat_{\omega}$.
\end{remark}

\section{Discrete Conduché $\omega$-functors}
\begin{paragr}
  Recall that given a category $\C$ and $M$ a class of arrows of $\C$, an arrow $f : X \to Y$ of $\C$ is said to be \emph{right orthogonal to $M$} if for every $m~:~A~\to~B~\in~M$ and every commutative square
  \[
  \begin{tikzcd}
  A \ar[d,"m"'] \ar[r] & X \ar[d,"f"]\\
  B \ar[r]& Y
  \end{tikzcd}
  \]
  there exists a \emph{unique} $l: B \to X$ (referred to as a \emph{lifting}) such that the diagram
    \[
  \begin{tikzcd}
  A \ar[d,"m"'] \ar[r] & X \ar[d,"f"]\\
  B\ar[ur,"l"] \ar[r]& Y.
  \end{tikzcd}
  \]
  is commutative.

\end{paragr}
\begin{definition}\label{definitionconduche}
  Let $f : C \to D$ be an $\omega$-functor. We say that $f$ is a \emph{discrete Conduché $\omega$-functor} if it is right orthogonal to the arrows
  \[
  \begin{tikzcd}
   \sD_n \ar[d, "\kappa^n_k"]\\ \sD_k
    \end{tikzcd}
  \]
  and
  \[
  \begin{tikzcd}
   \sD_n \ar[d, "\nabla^n_k"]\\ \sD_n\coprod_{\sD_k} \sD_n
    \end{tikzcd}
  \]
  for any $k,n \in \mathbb{N}$ such that $0 \leq k <n$.
\end{definition}
\begin{remark}
  Since the class of discrete Conduché $\omega$-functors is a right orthogonal class, it has many good properties. One of them is that discrete Conduché $\omega$-functors are stable by pullback.
  \end{remark}

\begin{paragr}Unfolding the Definition \ref{definitionconduche}, the right orthogonality to $\kappa^n_k$ means that for any $x \in C_n$, for any $y \in D_k$ such that
\[
f(x)=1^n_k(y)
\]
there exists a unique\footnote{Note that since the map $z \mapsto 1^n_k(z)$ is injective, the uniqueness comes for free.} $z \in C_k$ such that
\[
x=1^n_k(z)
\]
and
\[
f(z)=y.
\]
Similarly, the right orthogonality to $\nabla^n_k$ means that for any $x \in C_n$, if
\[
f(x)=y_1\ast_ky_2
\]
with $y_1,y_2 \in D_n$ that are $k$-composable, then there exists a \emph{unique} pair $(x_1,x_2)$ of elements of $C_n$ such that
\begin{enumerate}
\item $s^k(x_1)=t^k(x_2)$ and $x=x_1 \ast_k x_2$,
  \item $f(x_1)=y_1$ and $f(x_2)=y_2$.
\end{enumerate}
\end{paragr}

As it turns out, there are redundancies in the definition of discrete Conduché $\omega$-functor.
\begin{lemma}\label{lemmaconducheimpliesdiscrete}
Let $k<n \in \mathbb{N}$ and $f : C \to D$ an $\omega$-functor. If $f$ is right orthogonal to
    \[
  \begin{tikzcd}
   \sD_n \ar[d, "\nabla^n_k"]\\ \sD_n\coprod_{\sD_k} \sD_n
    \end{tikzcd}
  \]
  then it is right orthogonal to
  \[
  \begin{tikzcd}
   \sD_n \ar[d, "\kappa^n_k"]\\ \sD_k.
    \end{tikzcd}
  \]  
\end{lemma}
\begin{proof}
  Let $x \in C_n$ and suppose that $f(x)=1_k^n(y)$ with $y \in D_k$. Notice that
  \[f(x)=1_k^n(y)\ast_k 1_k^n(y)\]
  and
  \[x=x\ast_k1^n_k(s^k(x)) = 1^n_k(t^k(x))\ast_kx\]
  and
  \[f(1^n_k(s^k(x)))=1^n_k(s^k(f(x)))=1^n_k(y)=1^n_k(t^k(f(x)))=f(1^n_k(t^k(x))).\]
  Using the uniqueness part of the right orthogonality to $\nabla^n_k$, we deduce that $x=1^n_k(s^k(x))=1^n_k(t^k(x))$. Thus, if we set $z=s^k(x)=t^k(x)$, we have $x=1^n_k(z)$ and $f(z)=y$, which is what we needed to prove. 
\end{proof}
\begin{remark}
  In light of the previous lemma, the reader may wonder why we included the right orthogonality to $\kappa^n_k$ in the definition of discrete Conduché $\omega$-functor. The motivation for such a choice is that it \emph{should} be possible to apply this definition \emph{mutatis mutandis} for non-strict $\omega$-categories where Lemma \ref{lemmaconducheimpliesdiscrete} might not hold anymore.\end{remark}

\begin{lemma}\label{lemmaconduchecondition}
  Let $k<m<n \in \mathbb{N}$ and $f : C \to D$ be an $\omega$-functor. If $f$ is right orthogonal to $\nabla^n_k$ and $\kappa^n_m$, then it is right orthogonal to $\nabla^m_k$.
\end{lemma}
\begin{proof} Let $x \in C_m$ and suppose that \[f(x)=y_1\ast_ky_2\] with $y_1, y_2 \in D_m$ that are $k$-composable. Then $1^n_m(x) \in C_n$ and \[f(1^n_m(x))=1^n_m(y_1)\ast_k1^n_m(y_2).\]
  From the right orthogonality to $\nabla^n_k$, we know that there exist $z_1, z_2~\in~C_n$ that are $k$-composable such that $f(z_1)=1^n_m(y_1)$, $f(z_2)=1^n_m(y_2)$ and \[1^n_m(x)=z_1\ast_kz_2.\]

  From the right orthogonality to $\kappa^n_m$, we know that there exist $x_1, x_2 \in C_m$ such that $z_1=1^n_m(x_1)$, $z_2=1^n_m(x_2)$, $f(x_1)=y_1$ and $f(x_2)=y_2$. It follows that $s^k(x_1)=t^k(x_2)$ and \[1^n_m(x)=1^n_m(x_1)\ast_k1^n_m(x_2)=1^n_m(x_1\ast_kx_2),\] hence $x=x_1\ast_k x_2$. This proves the existence part of the right orthogonality to $\nabla^m_k$.
  
  Now suppose that there are two pairs $(x_1,x_2)$ and $(x_1',x_2')$ of $m$-cells of $C$ that lift the pair $(y_1,y_2)$ in the usual way. It follows that $(1^n_m(x_1),1^n_m(x_2))$ and $(1^n_m(x_1'),1^n_m(x_2'))$ lift the pair $(1^n_m(y_1),1^n_m(y_2))$ in the usual way.

  From the uniqueness part of the right orthogonality to $\nabla^n_k$, we deduce that ${1^n_m(x_1)=1^n_m(x_1')}$ and $1^n_m(x_2)=1^n_m(x_2')$, hence $x_1=x_1'$ and $x_2=x_2'$. 
\end{proof}
\begin{paragr}
  Recall that with the definition we chose (paragraph \ref{paragr:ncat}), an $n$-functor is a particular type of $\omega$-functor. Hence, it makes sense to call an $n$-functor a \emph{discrete Conduché $n$-functor} when it is a discrete Conduché $\omega$-functor.
  \end{paragr}
  \begin{proposition}
    Let $f : C \to D$ be an $n$-functor. It is a discrete Conduché $n$-functor if and only if it is right orthogonal to $\nabla^n_k$ for any $k \in \mathbb{N}$ such that $k<n$.
    \end{proposition}
  \begin{proof}
    From Lemma \ref{lemmaconducheimpliesdiscrete}, we know that we only need to show that $f$ is right orthogonal to $\nabla^m_k$ for all $k<m \in \mathbb{N}$.
    When $m\leq n$, this follows from Lemma \ref{lemmaconducheimpliesdiscrete} and Lemma \ref{lemmaconduchecondition}. As for the case $m>n$, this follows from the fact that for any $m$-cell $x$ in an $n$-category with $m>n$, there exists a unique $n$-cell $x'$ such that $x=1^m_n(x')$. Details are left to the reader.
  \end{proof}
  \begin{corollary}\label{corollary:truncationconduche}
    Let $f : C \to D$ be an $\omega$-functor and $n \in \mathbb{N}$. The $n$-functor $\tau_{\leq n }(f) : \tau_{\leq n }(C) \to \tau_{\leq n }(D)$ is a discrete Conduché $n$-functor if and only if $f$ is right orthogonal to $\nabla^n_k$ for any $k \in \mathbb{N}$ such that $k<n$. 
    \end{corollary}
\begin{lemma}\label{lemmaconducheindecomposablecells}
  Let $f : C \to D$ be a discrete Conduché $\omega$-functor and $x$ a cell of $C$. Then $x$ is an indecomposable cell if and only if $f(x)$ is an indecomposable cell.
\end{lemma}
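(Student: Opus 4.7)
The plan is to prove the two implications separately, using the two lifting properties (orthogonality to $\kappa^n_k$ and to $\nabla^n_k$) that define a discrete Conduché functor. The $n=0$ case is trivial since every $0$-cell is indecomposable on both sides, so I would assume $n \geq 1$ throughout and let $x \in C_n$.

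For the forward direction, I would show first that $f(x)$ is non-degenerate: if $f(x) = 1^n_k(y)$ for some $k < n$, then orthogonality to $\kappa^n_k$ produces an $x' \in C_k$ with $x = 1^n_k(x')$, contradicting the non-degeneracy of $x$. Next, I would show the decomposition condition: given $f(x) = y_1 \ast_k y_2$ with $k < n$, orthogonality to $\nabla^n_k$ yields $x_1, x_2 \in C_n$ with $x = x_1 \ast_k x_2$ and $f(x_i) = y_i$; then by indecomposability of $x$, either $x_1 = 1^n_k(t^k(x))$ or $x_2 = 1^n_k(s^k(x))$, and applying $f$ and using $f \circ t^k = t^k \circ f$ (and similarly for $s^k$) gives the desired conclusion for $f(x)$.

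For the reverse direction, non-degeneracy of $x$ is immediate since any degeneracy $x = 1^n_k(x')$ would push forward to $f(x) = 1^n_k(f(x'))$. The decomposition condition requires a bit more care. Given $x = x_1 \ast_k x_2$, we have $f(x) = f(x_1) \ast_k f(x_2)$, so by the indecomposability of $f(x)$, without loss of generality $f(x_1) = 1^n_k(t^k(f(x))) = 1^n_k(f(t^k(x)))$. Here I would invoke orthogonality to $\kappa^n_k$ to obtain a unique $x_1' \in C_k$ with $x_1 = 1^n_k(x_1')$; the key observation is then that $x_1' = t^k(1^n_k(x_1')) = t^k(x_1) = t^k(x)$, so $x_1 = 1^n_k(t^k(x))$ as required.

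The only step that needs genuine attention is the last identification $x_1 = 1^n_k(t^k(x))$: one must not merely use that $x_1$ is degenerate, but pin down precisely which $k$-cell it is a degeneracy of, which is where the uniqueness clause of the Conduché lifting property (together with the compatibility $t^k(x_1) = t^k(x)$ coming from $x = x_1 \ast_k x_2$) does the work. Everything else reduces to straightforward applications of the orthogonality properties and the globular/functoriality identities.
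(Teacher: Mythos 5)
Your proof is correct and follows essentially the same route as the paper's: orthogonality to $\kappa^n_k$ handles degeneracy in both directions, and orthogonality to $\nabla^n_k$ lifts decompositions of $f(x)$. The only difference is that you spell out the identification $x_1 = 1^n_k(t^k(x))$ via $x_1' = t^k(x_1) = t^k(x)$, a detail the paper leaves implicit when it merely says $x_1$ or $x_2$ ``has to be of the form $1^n_k(z)$''.
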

\begin{proof}
  If $x$ is a 0-cell, there is nothing to show since every $0$-cell is indecomposable. We suppose now that $x$ is an $n$-cell with $n>0$.

  Suppose that $x$ is indecomposable. The right orthogonality to $\kappa_k^n$ for any $0 \leq k <n$ implies that $f(x)$ is non-degenerate since, if it were degenerate, $x$ would be too. Suppose that
  \[
  f(x)=y_1\ast_ky_2
  \]
  with $y_1,y_2 \in D_n$ that are $k$-composable. The right orthogonality to $\nabla^n_k$ implies that
  \[
  x=x_1 \ast_k x_2
  \]
  with $f(x_1)=y_1$ and $f(x_2)=y_2$. Since $x$ is indecomposable, $x_1$ or $x_2$ has to be of the form $1^n_k(z)$ with $z \in C_k$. Thus, $y_1$ or $y_2$ has to be of the form $1^n_k(z')$ with $z' \in D_k$. This proves that $f(x)$ is indecomposable.

  Suppose that $f(x)$ is indecomposable. Then $x$ is non-degenerate because otherwise $f(x)$ would be degenerate. Suppose that
  \[
  x=x_1 \ast_k x_2
  \]
  with $x_1,x_2 \in C_n$ that are $k$-composable. Thus,
  \[
  f(x)=f(x_1)\ast_k f(x_2).
  \]
  Since $f(x)$ is indecomposable, either $f(x_1)$ or $f(x_2)$ has to be of the form $1^n_k(z)$ with $z \in D_k$. From the right orthogonality to $\kappa^n_k$, it follows that either $x_1$ or $x_2$ has to be of the form $1^n_k(z')$ with $z' \in C_k$. This proves that $x$ is indecomposable.
\end{proof}
  From the previous lemma and Proposition \ref{propositionbasisindecomposablecells}, we deduce the following proposition. 
\begin{proposition}\label{prop:conducheimpliesrigid}
  Let $f : C \to D$ be an $\omega$-functor with $C$ and $D$ free $\omega$-categories. If $f$ is a discrete Conduché $\omega$-functor then $f$ is rigid.
  \end{proposition}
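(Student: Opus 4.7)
The plan is essentially to chain together the two previous results that the author has carefully set up. Since $C$ and $D$ are free, Proposition \ref{propositionbasisindecomposablecells} identifies the $k$-basis $\Sigma_k^C$ (resp.\ $\Sigma_k^D$) with the set of indecomposable $k$-cells of $C$ (resp.\ $D$). Rigidity of $f$ is therefore equivalent to the statement that $f_k$ sends indecomposable $k$-cells to indecomposable $k$-cells, for every $k \in \mathbb{N}$.

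Concretely, I would take $k \in \mathbb{N}$ and $x \in \Sigma_k^C$, and argue as follows. By Proposition \ref{propositionbasisindecomposablecells} applied to $C$, the cell $x$ is indecomposable. By Lemma \ref{lemmaconducheindecomposablecells} (the ``only if'' direction, which uses that $f$ is a discrete Conduché functor), $f_k(x)$ is then an indecomposable $k$-cell of $D$. Applying Proposition \ref{propositionbasisindecomposablecells} to $D$, we conclude that $f_k(x) \in \Sigma_k^D$. Hence $f_k(\Sigma_k^C) \subseteq \Sigma_k^D$, which is precisely the definition of rigidity.

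There is no real obstacle here: all the substance has already been extracted into the earlier lemmas. The only thing to emphasize is that both $C$ and $D$ are assumed free, so that Proposition \ref{propositionbasisindecomposablecells} is applicable on both sides simultaneously; without freeness of $D$, the set of indecomposable cells need not be a basis and the implication would break down.
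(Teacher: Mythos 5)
Your proof is correct and is exactly the argument the paper intends: the paper derives this proposition directly from Lemma \ref{lemmaconducheindecomposablecells} and Proposition \ref{propositionbasisindecomposablecells}, just as you do, only without spelling out the details. Your write-up simply makes explicit the chain of implications the paper leaves to the reader.
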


\section{Cellular extensions and technicalities on words}

\begin{definition}\label{definitioncellextension}
A \emph{cellular extension} of an $n$-category $C$ is a quadruple $(C,\Sigma,\sigma,\tau)$ where:
  \begin{itemize}
  \item[-] $C$ is an $n$-category,
  \item[-] $\Sigma$ is a set,
  \item[-] $\sigma$ and $\tau$ are maps $\Sigma \to C_n$ such that for every element $x$ of $\Sigma$, the $n$-cells $\sigma(x)$ and $\tau(x)$ are parallel.
  \end{itemize}
\end{definition}
When the natural number $n$ is understood, a \emph{cellular extension} means a cellular extension of some $n$-category.

\begin{definition}
  Let $E = (C,\Sigma,\sigma,\tau)$ and $E'=(C',\Sigma',\sigma',\tau')$ be two cellular extensions of $n$-categories. A \emph{morphism of cellular extensions} from $E$ to $E'$ is a pair $(f,\varphi)$ where:
  \begin{itemize}
  \item[-] $f$ is $n$-functor from $C$ to $C'$,
  \item[-] $\varphi$ is a map $\Sigma \to \Sigma'$,
  \item[-] the following squares are commutative
    \[
    \begin{tikzcd}
      \Sigma \ar[r,"\varphi"] \ar[d,"\sigma"] & \Sigma' \ar[d,"\sigma'"] \\
      C_n \ar[r,"f_n"] & C'_n
    \end{tikzcd}
    \qquad
       \begin{tikzcd}
      \Sigma \ar[r,"\varphi"] \ar[d,"\tau"] & \Sigma' \ar[d,"\tau'"] \\
      C_n \ar[r,"f_n"] & C'_n.
    \end{tikzcd}
    \]
    \end{itemize}
\end{definition}
Cellular extensions of $n$-categories and morphisms between them form a category $\Cat_n^+$. There is an obvious functor $U_n : \Cat_{n+1} \to \Cat_n^+$ that sends an $(n+1)$-category $C$ to the cellular extension $(\tau_{\leq n}(C),C_{n+1},s,t)$. We shall see later that this functor has a left adjoint. 
\begin{paragr}\label{paragrdefinitionwords}
Let $E=(C,\Sigma,\sigma,\tau)$ be a cellular extension of an $n$-category. We consider the alphabet that has:
\begin{itemize}
\item[-] a symbol $\cc_{\alpha}$ for each $\alpha \in \Sigma$,
\item[-] a symbol $\ii_{x}$ for each $x \in C_n$,
\item[-] a symbol $\ast_k$ for each $0 \leq k \leq n$,
\item[-] a symbol of opening parenthesis $($,
\item[-] a symbol of closing parenthesis $)$.
\end{itemize}
We write $\mathcal{W}[E]$ for the set of finite words on this alphabet. If $w$ and $w'$ are elements of $\mathcal{W}[E]$, we write $ww'$ for their concatenation.

The \emph{length} of a word $w$, denoted by $\mathcal{L}(w)$, is the number of symbols that appear in $w$.
\end{paragr}

\begin{paragr}
We now recursively define the set $\T[E] \subseteq \W[E]$ of \emph{well formed words} (or \emph{terms}) on this alphabet together with maps $s,t : \T[E]  \to C_n$ that satisfy the globular conditions: 
\begin{itemize}
\item[-] $(\cc_{\alpha}) \in \T[E]$ with $s((\cc_{\alpha}))=\sigma(\alpha)$ and $t((\cc_{\alpha}))=\tau(\alpha)$ for each $\alpha \in \Sigma$,
\item[-] $(\ii_{x}) \in \T[E]$ with $s((\ii_x))=t((\ii_x))=x$ for each $x \in C_n$,
\item[-] $ (v \ast_n w) \in \T[E]$ with $s((v \ast_n w))=s(w)$ and $t((v \ast_n w))=t(v)$ for $v,w \in \T[E]$ such that $s(v)=t(w)$,
  \item[-] $(v \ast_k w) \in \T[E]$ with \[s((v \ast_k w)) = s(v) \ast_k s(w)\] and \[t((v \ast_k w))=t(v)\ast_k t(w)\] for $v, w \in \T[E]$ and $0 \leq k < n$, such that $s^k(s(v))=t^k(t(w))$.
\end{itemize}
We define $s^k , t^k: \T[E] \to C_k$ as iterated source and target (with $s^n=s$ and $t^n=t$ for consistency). We say that two well formed words $v$ and $w$ are \emph{parallel} if
\[s(v)=s(w) \text{ and }t(v)=t(w)\]
and we say that they are \emph{$k$-composable} for a $k\leq n$ if
\[s^k(v)=t^k(w).\]
\end{paragr}

\emph{For the rest of the section, we fix some cellular extension of an $n$-category $E=(C,\Sigma,\sigma,\tau)$. All the words considered are elements of $\W[E]$.}
\begin{definition}
The \emph{size} of a well formed word $w$, denoted by $|w|$, is the number of symbols $\ast_k$ for any $0 \leq k \leq n$ that appear in the well formed word $w$.
\end{definition}

\begin{definition}\label{def:subword}A word $v$ is a \emph{subword} of a word $w$ if there exist words $a$ and $b$ such that $w$ can be written as
  \[w=avb.\]
\end{definition}
\begin{remark}
  Beware that in the previous definition, none of the words were supposed to be well formed. In particular, a subword of a well formed word is not necessarily well formed. 
\end{remark}
\begin{paragr} Since a word $w$ is a finite sequence of symbols, it makes sense to write $w(i)$ for the symbol at position $i$ of $w$, with $0 \leq i \leq \mathcal{L}(w)-1$.

  For any $0 \leq i \leq \mathcal{L}(w)-1$, define $P_{w}(i)$ to be the number of opening parenthesis in $w$ with position $ \leq i$ minus the number of closing parenthesis in $w$ with position $\leq i$. This defines a function
  \[
    P_{w} : \{0,\dots,\mathcal{L}(w)-1\} \to \mathbb{Z}.
  \]
\end{paragr}
\begin{remark}
  Such a counting function is standard in the literature about formal languages. For example see \cite[chapter 1, exercice 1.4]{hopcroft1979introduction}.
  \end{remark}
\begin{definition}
  A word $w$ is \emph{well parenthesized} if:
  \begin{enumerate}
    \item it is not empty,
  \item $P_w(i) \geq 0$ for any $0 \leq i \leq \mathcal{L}(w)-1$,
    \item $P_w(i) = 0$ if and only if $i = \mathcal{L}(w)-1$.
    \end{enumerate}
\end{definition}
\begin{paragr}
  It follows from the previous definition that the first letter of a well parenthesized word is necessarily an opening parenthesis and that the last letter is necessarily a closing parenthesis. Thus, the length of a well parenthesized word is not less than 2.

  Moreover, it is immediate that if $w_1$ and $w_2$ are well parenthesized words then, for any $0 \leq k \leq n$,
  \[
  (w_1 \ast_k w_2)
  \]
  is well parenthesized.
  \end{paragr}
\begin{lemma}\label{lemmawellformedispseudo}
  A well formed word is  well parenthesized.
\end{lemma}
\begin{proof}
  Let $w$ be a well formed word. We proceed by induction on $|w|$. If $|w|=0$, then $w$ is either of the form
  \[
  (\cc_{\alpha})
  \]
  or of the form
  \[
  (\ii_{x}).
  \]
  In either case, the assertion is trivial.
  Now suppose that $|w|>0$, we know by definition that
  \[
  w=(w_1 \ast_k w_2)
  \]
  with $w_1,w_2$ well formed words such that $|w_1|, |w_2| < |w|$. The desired properties follow easily from the induction hypothesis. Details are left to the reader.
\end{proof}
The converse of the previous lemma is obviously not true. However, Corollary \ref{corollarypartialconverse} below is a partial converse.
\begin{lemma}\label{lemmasubwordpseudo}
  Let $w$ be a well parenthesized word of the form
  \[
  w=(w_1 \ast_k w_2)
  \]
  with $w_1$ and $w_2$ well parenthesized words, and $0 \leq k \leq n$ and let $v$ be a subword of $w$. If $v$ is well parenthesized then one the following holds:
  \begin{enumerate}
  \item $v=w$,
  \item $v$ is a subword of $w_1$,
    \item $v$ is a subword of $w_2$.
    \end{enumerate}
\end{lemma}
\begin{proof}
  Let $a$ and $b$ be words such that
  \[
  avb=w=(w_1\ast_k w_2).
  \]
  Let $l_1, l_2, l, l_a, l_b, l_v$ respectively be the length of $w_1,w_2,w,a,b,v$. Notice that
  \[
  l_a+l_v+l_b=l=l_1+l_2+3.
  \]
  Notice that since $v$ is well parenthesized, the following cases are forbidden:
  \begin{enumerate}
  \item $l_1 \leq l_a \leq l_1 +1$,
  \item $l_2 \leq l_b \leq l_2 +1$,
  \item $l_a\geq l-1$,
    \item $l_b\geq l-1$.
  \end{enumerate}
  Indeed, the first case would imply that the first letter of $v$ is a closing parenthesis or the symbol $\ast_k$. Similarly, the second case would imply that the last letter of $v$ is an opening parenthesis or the symbol $\ast_k$. The third and fourth cased would imply that $l_v<2$ which is also impossible.

  That leaves us with the following cases:
  \begin{enumerate}
  \item $l_a=0$,
  \item $l_b=0$,
  \item $0<l_a<l_1$ and $0<l_b < l_2$,
  \item $0<l_a<l_1$ and $l_b > l_2+1$,
  \item $l_1 +1 < l_a$ and $0 < l_b < l_2$.
  \end{enumerate}
  If we are in the first case, then
  \[
  P_w(j)=P_v(j)
  \]
  for $0 \leq j \leq l_v-1$. That implies that $P_w(l_v-1)=0$ which means that $l=l_v$, hence $w=v$.
  
  By a similar argument that we leave to the reader, we can show that the second case implies that $w=v$.

  If we are in the fourth (resp.\ fifth) case, then it is clear that $v$ is a subword of $w_1$ (resp.\ $w_2$).

  Suppose now that we are in the third case. Intuitively, it means that the first letter of $v$ is inside $w_1$ and the last letter of $v$ is inside $w_2$. Notice first that
  \begin{equation}\label{usefulinequality}\tag{$\star$}
  l_a<l_1 <l_a+l_v-3,
  \end{equation}
  where the inequality on the right comes from the fact that $l_v \geq 2$ because $v$ is well formed.
  
  Besides, by definition of $P_w$,
  \[
  P_w(j)=P_v(j-l_a)+P_w(l_a)
  \]
  for $l_a \leq j < l_v+l_a$. In particular, we have
  \[
  1=P_{w_1}(l_1-1)+1=P_w(l_1-1)=P_v(l_1-1)+P_w(l_a).
  \]
  From \eqref{usefulinequality} and since $v$ is well parenthesized, we deduce that
  \[
  P_v(l_1-1)>0.
  \]
  Hence, $P_w(l_a)\leq 0$ which is impossible because $w$ is well formed and ${l_a < l-1}$.\qedhere
  \end{proof}
\begin{corollary}\label{corollarypartialconverse}
  Let $w$ be a well parenthesized word. If $w$ is a subword of a well formed word, then it is also well formed.
\end{corollary}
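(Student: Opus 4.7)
The plan is to prove Corollary \ref{corollarypartialconverse} by induction on the size $|W|$ of a well formed word $W$ containing $w$ as a subword, using Lemma \ref{lemmawellformedispseudo} and Lemma \ref{lemmasubwordpseudo} as the main tools. The statement to establish is: for every well parenthesized word $w$, if $w$ is a subword of some well formed word $W$, then $w$ is itself well formed.

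First, for the base case $|W|=0$, the word $W$ is of the form $(\cc_\alpha)$ or $(\ii_x)$, so $\mathcal{L}(W)=3$. A straightforward inspection shows that the only subword of $W$ which is well parenthesized (in particular non-empty and such that $P$ vanishes only at the last position) is $W$ itself, which is well formed. This handles the base case.

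For the inductive step, suppose $|W|\geq 1$, so that by definition of well formed words we may write $W=(W_1\ast_k W_2)$ with $W_1,W_2\in\T[E]$ and $|W_1|,|W_2|<|W|$. By Lemma \ref{lemmawellformedispseudo}, both $W_1$ and $W_2$ are well parenthesized, hence so is $W$. Since $w$ is by assumption a well parenthesized subword of $W=(W_1\ast_k W_2)$, Lemma \ref{lemmasubwordpseudo} applies and yields three cases: either $w=W$, in which case $w$ is well formed directly; or $w$ is a subword of $W_1$; or $w$ is a subword of $W_2$. In the latter two cases, the induction hypothesis applies to $W_1$ (respectively $W_2$) since $|W_1|,|W_2|<|W|$, and delivers that $w$ is well formed.

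The main obstacle has already been absorbed into Lemma \ref{lemmasubwordpseudo}: it is the combinatorial analysis of where the subword $w$ can sit inside the parenthesized string $(W_1\ast_k W_2)$, which required ruling out the cases where $w$ would straddle the central $\ast_k$ or the outer parentheses in an incompatible way. Once that lemma is available, the corollary is just a clean induction, and no further case analysis on the shape of elementary constructions is needed.
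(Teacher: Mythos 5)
Your proof is correct and follows essentially the same route as the paper's: induction on the size of the containing well formed word, with the base case handled by inspecting $(\cc_\alpha)$ and $(\ii_x)$, and the inductive step reduced to the three cases supplied by Lemma \ref{lemmawellformedispseudo} and Lemma \ref{lemmasubwordpseudo}.
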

\begin{proof}
  Let $u$ be a well formed word such that $w$ is a subword of $u$. We proceed by induction on $|u|$. If $|u|=0$, then $u$ is either of the form
  \[
  (\cc_{\alpha})
  \]
  or of the form
  \[
  (\ii_x).
  \]
  In both cases, $w=u$ since the only well parenthesized subword of $u$ is $u$ itself.

  Suppose now that $|u|>0$. By definition,
  \[
  u=(u_1 \ast_k u_2)
  \]
  with $|u_1|,|u_2| < |u|$. By Lemmas \ref{lemmawellformedispseudo} and \ref{lemmasubwordpseudo}, we have that either:
  \begin{itemize}
  \item[-] $w=u$ in which case $w$ is well formed by hypothesis,
  \item[-] $w$ is a subword of $u_1$ and from the induction hypothesis we deduce that $w$ is well formed,
    \item[-] $w$ is a subword of $u_2$ which is similar to previous case.\qedhere
    \end{itemize}
\end{proof}
\begin{lemma}\label{lemmasubwords} Let $w$ be a well formed word of the form
  \[
  w = (w_1 \ast_k w_2)
  \]
  with $w_1$ and $w_2$ well formed words, and $0 \leq k \leq n$, and let $v$ be a subword of $w$. If $v$ is well formed, then we are in one of the following cases:
  \begin{enumerate}
  \item $v=w$,
  \item $v$ is a subword of $w_1$,
    \item $v$ is a subword of $w_2$.
    \end{enumerate}
\end{lemma}
\begin{proof}
  This follows immediately from Lemma \ref{lemmawellformedispseudo} and Lemma \ref{lemmasubwordpseudo}.
  \end{proof}
\begin{corollary}\label{corollarysubwordsubstitution}
  Let $u$ be a well formed word of the form
  \[
  vew 
  \]
  with $v$, $w$ and $e$ words and such that $e$ is well formed. If $e'$ is a well formed word that is parallel to $e$, then the word
  \[
  ve'w 
  \]
is also well formed.
\end{corollary}
\begin{proof}
  We proceed by induction on $|u|$.
  \begin{description}
    \item[Base case] If $|u|=0$, then necessarily $v$ and $w$ are both the empty word and the assertion is trivial.

  \item[Inductive step] If $|u| \geq 1$, then
  \[
  u=(u_1 \ast_k u_2)
  \]
  with $u_1$ and $u_2$ well formed words such that $|u_1|,|u_2| < |u|$. By hypothesis, $e$ is a subword of $u$ and from Lemma \ref{lemmasubwords}, we are in one of the following cases. 
  \begin{itemize}
  \item[-] $u=e$ in which case the assertion is trivial.
  \item[-] $e$ is a subword of $u_1$, which means that there exist words $\tilde{v}, \tilde{w}$ such that
    \[
    u_1=\tilde{v}e\tilde{w}.
    \]
    Moreover, we have \[v=(\tilde{v}\] and \[w=\tilde{w}\ast_k u_2).\] By induction hypothesis, the word
    \[
    \tilde{v}e'\tilde{w}
    \]
    is well formed and thus
    \[
    (\tilde{v}e'\tilde{w}\ast_k u_2)=vew
    \]
    is well formed.
    \item[-] $e$ is a subword of $u_2$, which is symmetric to the previous case.\qedhere
  \end{itemize}
  \end{description}
\end{proof}
\begin{lemma}\label{lemmaunicitydecompositionpseudo}
  Let $w_1,w_2,w_1',w_2'$ be well parenthesized words, and $0 \leq k \leq n$ and $0 \leq k' \leq n$ be such that
  \[
  (w_1\ast_k w_2) = (w_1' \ast_{k'} w_2').
  \]
  Then $w_1=w_1'$, $w_2=w_2'$ and $k=k'$.
\end{lemma}
\begin{proof}
  Let us define $l:=\mathrm{min}(\mathcal{L}(w_1),\mathcal{L}(w_1'))$. Notice that
  \[
  P_w(j)=P_{w_1}(j-1)+1=P_{w_1'}(j-1)+1
  \]
  for $0 < j \leq l$ hence
  \[
  P_{w_1}(l-1)=P_{w_1'}(l-1).
  \]
Since $w_1$ and $w_1'$ are well parenthesized, one of the members of the last equality (and thus both) is equal to $0$. That implies that $\mathcal{L}(w_1)=\mathcal{L}(w_1')$ and the desired properties follow immediately from that.
\end{proof}

\begin{lemma}\label{lemmaunicitydecomposition}
  Let $w_1,w_1',w_2,w_2'$ be well formed words, and $0 \leq k \leq n$ and $0 \leq k' \leq n$ be such that $(w_1\ast_k w_2)$ and $(w_1'\ast_{k'} w_2')$ are well formed.
  If
  \[
  (w_1 \ast_k w_2) = (w_1' \ast_{k'} w_2'),
  \]
  then
  \[
  w_1=w_1', w_2=w_2' \text{ and }k=k'.
  \]
  \end{lemma}
\begin{proof}
This follows from Lemma \ref{lemmawellformedispseudo} and Lemma \ref{lemmaunicitydecompositionpseudo}.
\end{proof}
\begin{corollary}\label{corollarycompatiblesubwords}
  Let $w$ be a well formed word and suppose that it can be written as
  \[w=(w_1 \ast_k w_2)\]
with $w_1$ and $w_2$ well formed words and $0\leq k \leq n$. Then $s^k(w_1)=t^k(w_2)$.
\end{corollary}
\begin{proof}
  By hypothesis, $|w| \geq 1$. From the definition of well formed words, we know that $w$ is of the form
  \[
  (w_1'\ast_{k'}w_2')
  \]
  with $w_1'$ and $w_2'$ well formed words and $0 \leq k' \leq n$ such that
  \[
  s^{k'}(w_1')=t^{k'}(w_2').
  \]
  From Lemma \ref{lemmaunicitydecomposition}, we have that $w_1'=w_1$, $w_2'=w_2$ and $k=k'$.
\end{proof}
\section{From cellular extensions to free $\omega$-categories}
\begin{definition}\label{definitionelementarymove}
  Let $E=(C,\Sigma,\sigma,\tau)$ be a cellular extension of an $n$-category and let $u,u' \in \T[E]$. An \emph{elementary move} from $u$ to $u'$ is a quadruple $\mu=(v,w,e,e')$ with $v,w \in \W[E]$ and $e,e' \in \T[E]$ such that 
  \[
  u=vew,
  \]
  \[
  u'=ve'w,
  \]
and one of the following holds:
\begin{enumerate}
  
\item
$e$ is of the form
      \[((x\ast_ky)\ast_kz)\]
  and $e'$  is of the form
      \[
      (x\ast_k(y\ast_kz))\]
      with $x,y,z \in \T[E]$ and $0\leq k \leq n$,
    \item $e$ is of the form \[((\ii_c)\ast_kx)\]
      and $e'$ is of the form
      \[x\]
      with $x \in \T[E]$, $0 \leq k \leq n$ and $c=1^n_k(t^{k}(x))$,
    \item $e$ is of the form
      \[(x\ast_k(\ii_{c}))\]
      and $e'$
      is of the form
      \[x\]
      with $x \in \T[E]$, $0 \leq k \leq n$ and $c=1^n_k(s^{k}(x))$,
  \item $e$ is of the form \[((\ii_{c})\ast_k(\ii_{d}))\] and $e'$ is of the form \[(\ii_{c\ast_kd})\]with $c,d \in C_{n}$ and $0 \leq k < n$,
  \item $e$ is of the form \[((x\ast_ky)\ast_l(z\ast_kt))\]
    and $e'$ is of the form 
    \[((x\ast_lz)\ast_k(y\ast_lt))\]
    with $x,y,z,t \in \T[E]$, $0\leq l < k \leq n$.
    \end{enumerate}
    \end{definition}

\begin{paragr}
We will use the notation
  \[
  \mu : u \rightarrow u'
  \]
  to say that $\mu$ is an elementary move from $u$ to $u'$.

  We now define an oriented graph\footnote{Here, \emph{oriented graph} is to be understood in the same way as the underlying (oriented) graph of a category.} $\G[E]$ with:
  \begin{itemize}
  \item[-] $\T[E]$ as its set of objects,
  \item[-] for all $u,u'$ in $\T[E]$, the set of elementary moves from $u$ to $u'$ as its set of arrows from $u$ to $u'$.
  \end{itemize}
  We will use the categorical notation
  \[
  \G[E](u,u')
  \]
  to denote the set of arrows from $u$ to $u'$.

    We will also sometimes write
  \[
  u \leftrightarrow u'
  \]
  to say that there exists an elementary move from $u$ to $u'$ or from $u'$ to $u$.   
\end{paragr}
\begin{definition}\label{definitionequivalence}
  Let $E=(C,\Sigma,\sigma,\tau)$ be a cellular extension of an $n$-category and $u,u' \in \T[E]$. We say that the well formed words $u$ and $u'$ are \emph{equivalent} and write
  \[
  u \sim u'
  \]
  if they are in the same connected component of $\G[E]$. More precisely, this means that there exists a finite sequence $(u_j)_{0\leq j \leq N}$ of well formed words with $u_0=u$, $u_N=u'$ and $u_j \leftrightarrow u_{j+1}$ for $0 \leq j < N$. The equivalence class of a well formed word $u$ will be denoted by $[u]$.
\end{definition}
\begin{lemma}\label{lemmaequivrelationsourcestargets}Let $u,u' \in \T[E]$. If $u \sim u'$ then $u$ and $u'$ are parallel.
\end{lemma}
\begin{proof}
  Let
  \[
  \mu =(v,w,e,e') : u \to u'
  \]
  be an elementary move from $w$ to $w'$. We are going to prove that $s(u)=s(u')$ and $t(u)=t(u')$ with an induction on $\mathcal{L}(v)+\mathcal{L}(w)$(cf.\ \ref{paragrdefinitionwords}). Notice first that, by definition of elementary moves, $|u|\geq 1$ and thus
  \[
  u=(u_1 \ast_k u_2)
  \]
  with $u_1,u_2 \in \T[E]$.
  \begin{description}
    
  \item[Base case] If $\mathcal{L}(v)+\mathcal{L}(w)=0$, it means that both $v$ and $w$ are both the empty word. It is then straightforward to check the desired property using Definition \ref{definitionelementarymove}.
  \item[Inductive step] Suppose now that $\mathcal{L}(v)+\mathcal{L}(w)\geq 0$. Since $e$ is a subword of $u$ that is well formed, from Lemma \ref{lemmasubwords} we are in one of the following cases:
    \begin{itemize}
    \item[-] $e=u$, which is exactly the base case.
    \item[-] $e$ is a subword of $u_1$, which means that there exist $\tilde{v},\tilde{w} \in \T[E]$ such that
      \[
      u_1 = \tilde{v}e\tilde{w}.
      \]
      Moreover, we have
      \[
      v=(\tilde{v}
      \]
      and
      \[
      w=\tilde{w}\ast_ku_2).
      \]
      From Corollary \ref{corollarysubwordsubstitution}, the word
      \[
      u_1':=\tilde{v}e'\tilde{w}
      \]
      is well formed. Therefore we can use the induction hypothesis on
      \[
      \tilde{\mu} :=(\tilde{v},\tilde{w},e,e') : u_1 \to u_1'.
      \]
      This shows that $s(u_1)=s(u_1')$ and $t(u_1)=t(u_1')$ and since
      \[
      u=(u_1\ast_k u_2) \text{ and } u'=(u_1' \ast_k u_2)
      \]
      it follows easily that $s(u)=s(u')$ and $t(u)=t(u')$.
      \item[-] $e$ is a subword of $u_2$, which is symmetric to the previous case.
    \end{itemize}

  \end{description}
  By definition of $\sim$, this suffices to show the desired properties.
\end{proof}
\begin{lemma}\label{lemmaequivrelationiscongruence}
  Let $v_1, v_2, v_1', v_2' \in \T[E]$ and $0 \leq k \leq n$ such that $v_1$ and $v_2$ are $k$-composable, and $v_1'$ and $v_2'$ are $k$-composable. If $v_1 \sim v_2$ and $v_1' \sim v_2'$ then
  \[
  (v_1 \ast_k v_2) \sim (v_1' \ast_k v_2').
  \]
\end{lemma}
\begin{proof}
Let 
  \[
  \mu = (v,w,e,e') : v_1 \to v_1' 
  \]
  be an elementary move. Set
  \[
  \tilde{v} := (v
  \]
  and
  \[
  \tilde{w} := w\ast_k v_2).
  \]
  Then, by definition, $(\tilde{v},\tilde{w},e,e')$ is an elementary move from $(v_1 \ast_k v_2)$ to $(v_1' \ast_k v_2)$. Similarly, if we have an elementary move from $v_2$ to $v_2'$, we obtain an elementary move from $(v_1 \ast_k v_2)$ to $(v_1 \ast_k v_2')$.

  By definition of $\sim$, this suffices to show the desired property.
\end{proof}
\begin{paragr}\label{paragr:bar}
  Let $E=(C,\Sigma,\sigma,\tau)$ be a cellular extension of an $n$-category, $D$ an $(n+1)$-category and
  \[
  (\varphi,f) : E \longrightarrow U_n(D)=(\tau_{\leq n}(D),D_{n+1},s,t)
  \]
  a morphism of cellular extensions. We recursively define a map
  \[
  \widehat{\varphi} : \T[E] \to D_{n+1} 
  \]
by
\begin{itemize}
\item[-]$\widehat{\varphi}((\cc_{\alpha}))=\varphi(\alpha)$ for $\alpha \in \Sigma$,
\item[-]$\widehat{\varphi}((\ii_x))=1_{f(x)}$ for $x \in C_n$,
\item[-]$\widehat{\varphi}((v\ast_k w))=\widehat{\varphi}(v)\ast_k\widehat{\varphi}(w)$ for $0 \leq k \leq n$, $v$ and $w$ two well formed words that are $k$-composable.
\end{itemize}
\end{paragr}
\begin{lemma}\label{lemmarho}
 The map $\widehat{\varphi}$ commutes with source and target, i.e. for a well formed word $w$, we have
\[
s(\widehat{\varphi}(w))=f(s(w)) \text{ and } t(\widehat{\varphi}(w))=f(t(w)).
\]
\end{lemma}
\begin{proof}
   The lemma is proven with an induction left to the reader.
\end{proof}
\begin{lemma}\label{lemma:barcompatible}
 Let $v$ and $w$ be two well formed words. If $v \sim w$ then \[{\widehat{\varphi}(v)=\widehat{\varphi}(w)}.\]
\end{lemma}
\begin{proof}
 It suffices to prove that for any elementary move $\mu : v \to w$, we have $\widehat{\varphi}(v)=\widehat{\varphi}(w)$, which is immediate from the axioms for $\omega$-category (see paragraph \ref{defomegacat}).
  \end{proof}
\begin{paragr}\label{paragrfreecategoryonextension}
Let $E=(C,\Sigma,\sigma,\tau)$ be a cellular extension of an $n$-category.

From Lemma \ref{lemmaequivrelationsourcestargets}, we deduce that $s,t : \T[E] \to C_n$ induce maps
\[
s, t : \T[E]/{\sim} \to C_n.
\]
Let $[v]$ and $[w]$ be two elements of $\T[E]/{\sim}$ such that $s^k([v])=t^k([w])$ for some $0 \leq k \leq n$. From Lemma \ref{lemmaequivrelationiscongruence}, we can define without ambiguity:
\[
[v]\ast_k[w] := [v\ast_k w].
\]

We leave it to the reader to show that these data add up to an $(n+1)$-category $E^*$ with $\tau_{\leq n}(E^*)=C$ and $E^*_{n+1}=\T[E]/{\sim}$. 

Note that we have a canonical map
\[
\begin{aligned}
  j_E : \Sigma &\to E^*_{n+1} \\
  \alpha &\mapsto [(\cc_{\alpha})]
  \end{aligned}
\]
and the following two triangles are commutative 
\[
\begin{tikzcd}
  \Sigma \ar[r,"j_E"] \ar[rd,"\sigma"'] & E^*_{n+1} \ar[d,"s^n"]\\
  &E_n
\end{tikzcd}
\qquad
\begin{tikzcd}
  \Sigma \ar[r,"j_E"] \ar[rd,"\tau"'] & E^*_{n+1} \ar[d,"t^n"]\\
  &E_n.
  \end{tikzcd}
\]
\begin{lemma}\label{lemmainjectivitymapj}
  The map $j_E : \Sigma \to E^*_{n+1}$ is injective.
\end{lemma}
\begin{proof}
For any $\alpha \in \Sigma$, it is straightforward to check that the number of occurences of $\cc_{\alpha}$ in a well formed word $w$ depends only on its equivalence class $[w]$. In particular, for $\alpha \neq \beta$ in $\Sigma$, $[(\cc_{\alpha})]\neq [(\cc_{\beta})]$. 
\end{proof}
As a consequence of the previous lemma, we will always consider $\Sigma$ as a subset of $E^*_{n+1}$ and $j_E$ as the canonical inclusion.
\begin{proposition}\label{universalpropertyfreecategory}
  Let $E=(C,\Sigma,\sigma,\tau)$ be a cellular extension of an $n$-category. Then $\Sigma$ is an $(n+1)$-basis of the $(n+1)$-category $E^*$.
\end{proposition}
\begin{proof}
  Let $D$ be an $(n+1)$-category, $ f : C \to \tau_{\leq n}(D)$ an $n$-functor and a map $ \varphi : \Sigma \to D_{n+1}$ compatible with source and target. In other words, we have a morphism of cellular extension $(\varphi,f):E \to U_n(D)$. From paragraph \ref{paragr:bar}, we have a map
  \[
  \widehat{\varphi} : \T[E] \to D_{n+1},
  \]
  whose restriction to $\Sigma$ is $\varphi$. From all the results from \ref{lemmaequivrelationsourcestargets} to \ref{lemma:barcompatible}, $\widehat{\varphi}$ induces a map
  \[
  \T[E]/{\sim} \to D_{n+1},
  \]
  which is compatible with source, target, units, and composition.
  This proves the existence of an $(n+1)$-functor
  \[
  f' : E^* \to D
  \]
  such that $\tau_{\leq n}(f')=f$ and the restriction of $f'_{n+1}$ to $\Sigma$ is $\varphi$.
  
  Let $f'' : E^{*} \to D$ be another $(n+1)$-functor with the same properties and let $x$ be an $(n+1)$-cell of $E^*$. By definition, there exists a well formed word $w$ such that
  \[
  x=[w].
  \]
  By a quick induction on $|w|$ that we leave to the reader, we easily prove that ${f''_{n+1}=f'_{n+1}}$. Since by definition $\tau_{\leq n}(f'')=f$, we have $f'=f''$. 
  \end{proof}
We leave it to the reader to extend the correspondance \[E=(C,\Sigma,\sigma,\tau)~\mapsto~E^*\] to a functor $\Cat_n^+ \to \Cat_{n+1}$. 
\begin{corollary}
  Let $n \in \mathbb{N}$. The functor
  \[
  \begin{aligned}
    \Cat_{n}^+ &\to \Cat_{n+1} \\
    E& \mapsto E^*
  \end{aligned}
  \]
  is left adjoint to the functor
  \[
  U_n : \Cat_{n+1} \to \Cat_{n}^+.
  \]
\end{corollary}
\begin{proof}
  This is a reformulation of the universal property from Definition \ref{defbasis}.\qedhere
\end{proof}
\begin{remark} Note that this left adjoint has already been explicitly constructed in the literature, for example in \cite{makkai2005word} or in \cite{metayer2008cofibrant}. Our construction is greatly inspired from the latter reference but it differs in one subtle point. Métayer defines an elementary relation on parallel well formed words and then takes the congruence generated by it, whereas we directly defined an explicit equivalence relation (Definition \ref{definitionequivalence}) and then showed that that two equivalent well formed words are necessarily parallel (Lemma \ref{lemmaequivrelationsourcestargets}) and that it is in fact a congruence (Lemma \ref{lemmaequivrelationiscongruence}).
\end{remark}
\begin{paragr}\label{paragrdefinitionevaluationfunction}
  Let $C$ be an $\omega$-category and $\Sigma$ a subset of $C_{n+1}$. We define the cellular extension
\[
E_{\Sigma}=(\tau_{\leq n}(C),\Sigma,s,t)
\]
where $s$ and $t$  mean the restrictions of $s,t : C_{n+1} \to C_n$ to $\Sigma \subseteq C_{n+1}$.

In order to simplify the notations, we will allow ourselves to  write $\T[\Sigma]$ instead of $\T[E_{\Sigma}]$ when there is no ambiguity on the rest of the data.

The canonical inclusion $\iota : \Sigma \hookrightarrow C_{n+1}$ induces a canonical morphism
\[
(\iota,\mathrm{id}_{\tau_{\leq n}(C)}) : E_{\Sigma} \to U_n(C)=(\tau_{\leq n}(C),C_{n+1},s^n,t^n).
\]
From paragraph \ref{paragr:bar}, we obtain a map, that we denote $\rho_{\Sigma}$ instead of $\widehat{\iota}$,
\[
\rho_{\Sigma} : \T[\Sigma] \to C_{n+1}
\]
such that
\begin{itemize}
\item[-] $\rho_{\Sigma}((\cc_{\alpha}))=\alpha$ for $\alpha \in \Sigma$,
\item[-] $\rho_{\Sigma}((\ii_x))=1_{x}$ for $x \in C_n$,
  \item[-] $\rho_{\Sigma}((v\ast_kw))=\rho_{\Sigma}(v)\ast_k\rho_{\Sigma}(w)$ for $0\leq k \leq n$, $v$ and $w$ two well formed words that are $k$-composable.
\end{itemize}
\end{paragr}

\begin{paragr}\label{paragr:fiberrho}
Let $a$ be an element of $C_{n+1}$, we define $\T[\Sigma]_a$ to be
\[
\T[\Sigma]_a=\{w\in \T[\Sigma]\quad | \quad \rho_{\Sigma}(w)=a\}.
\]
Lemma \ref{lemma:barcompatible} implies that if $v \in \T[\Sigma]_a$ and $v \sim w$, then $w \in \T[\Sigma]_a$.

We define $\G[\Sigma]_a$ to be the full subgraph of $\G[\Sigma]$ whose set of objects is $\T[\Sigma]_a$.
\end{paragr}
\begin{proposition}\label{propositionbasis0connected}
  Let $C$ be an $\omega$-category and $\Sigma \subseteq C_{n+1}$. Then $\Sigma$ is an $(n+1)$-basis of $C$ if and only if for every $a \in C_{n+1}$, the graph $\G[\Sigma]_a$ is 0-connected (i.e. non-empty and connected).
  
  More precisely this means that for every $a \in C_{n+1}$: 
  \begin{itemize}
  \item[-] there exists $w \in \T[\Sigma]$ such that $\rho_{\Sigma}(w)=a$,
  \item[-] for every $v,w \in \T[\Sigma]$, if $\rho_{\Sigma}(v)=a=\rho_{\Sigma}(w)$ then $v \sim w$.
    \end{itemize}
\end{proposition}
\begin{proof}
  From Proposition \ref{universalpropertyfreecategory}, we know that $E_\Sigma^*$ has $\Sigma$ as an $(n+1)$-basis. Hence, the canonical morphism
  \[
  E_{\Sigma} \to U_n(C)
  \]
  from paragraph \ref{paragrdefinitionevaluationfunction} induces a map
    \[
  \T[\Sigma]/{\sim}\to C_{n+1},
  \]
which is nothing but the map obtained from $\rho_{\Sigma}$ by applying Lemma \ref{lemma:barcompatible}. This maps sends $\Sigma$ (as a subset of $\T[\Sigma]/{\sim}$) to $\Sigma$ (as a subset of $C_{n+1}$). Since $\Sigma$ is an $(n+1)$-basis of $E_{\Sigma}^*$, we easily deduce that $\Sigma$ is an $(n+1)$-basis of $C$ if and only if the previous map is an isomorphism, which is exactly what we wanted to prove. Details are left to the reader.
\end{proof}
\end{paragr}

\section{Discrete Conduché $\omega$-functors and polygraphs}
\begin{paragr}
Let $f : C \to D$ be an $\omega$-functor, $n>0$, $\Sigma^C \subseteq C_n$ and $\Sigma^D \subseteq D_n$ such that $f_n(\Sigma^C)\subseteq \Sigma^D$. We recursively define a map 
\[
\widetilde{f} : \W[\Sigma^C] \to \W[\Sigma^D]
\]
with
\begin{itemize}
\item[-] $\wt{f}(\cc_{\alpha})=\cc_{f(\alpha)}$ for $\alpha \in \Sigma^C$,
\item[-] $\wt{f}(\ii_x)=\ii_{f(x)}$ for $x \in C_n$,
\item[-] $\wt{f}(\ast_k)=\ast_k$ for $0 \leq k < n$,
\item[-] $\wt{f}(\,(\,)=($,
  \item[-] $\wt{f}(\,)\,)=)$.
\end{itemize}
Notice that for any word $w \in \W[\Sigma^C]$, $|\wt{f}(w)|=|w|$ and $\mathcal{L}(\wt{f}(w))=\mathcal{L}(w)$.
\end{paragr}
\begin{lemma}\label{lemmamapinducedonwords}
 Let $f : C \to D$ be an $\omega$-functor, $\Sigma^C \subseteq C_{n}$ and $\Sigma^D \subseteq D_{n}$ such that $f_{n}(\Sigma^C)\subseteq \Sigma^D$. For every $u \in \W[\Sigma^C]$:
  \begin{enumerate}
  \item if $u$ is well formed then $\wt{f}(u)$ is well formed,
  \item if $\wt{f}(u)$ is well formed and if $u$ is a subword (\ref{def:subword}) of a well formed word then it is also well formed.
    \end{enumerate}
\end{lemma}
\begin{proof}
  The first part of the previous lemma is proved with a short induction left to the reader. For the second part, first notice that the map
  \[
  \wt{f} : \W[\Sigma^C] \to \W[\Sigma^D]
  \]
  satisfies the following property: 
  \begin{center}
    For any $w \in \W[\Sigma^C]$, $w$ is well parenthesized if and only if $\wt{f}(w)$ is well parenthesized.
    \end{center}
It suffices then to apply Lemma \ref{lemmawellformedispseudo} and then Corollary \ref{corollarypartialconverse}.
\end{proof}
\begin{paragr}
The first part of Lemma \ref{lemmamapinducedonwords} shows that $\wt{f}$ induces a map 
\[
\wt{f} : \T[\Sigma^C] \to \T[\Sigma^D].
\]
Moreover, we have a commutative square
\[
\begin{tikzcd}
  \T[\Sigma^C] \ar[r,"\rho_{C}"] \ar[d,"\wt{f}"] &C_{n} \ar[d,"f_{n}"] \\
  \T[\Sigma^D] \ar[r,"\rho_{D}"] & D_{n}
  \end{tikzcd}
\]
where $\rho_C$ and $\rho_D$ respectively stand for $\rho_{\Sigma^C}$ and $\rho_{\Sigma^D}$.

Thus, for every $a \in C_{n}$ we can define a map:
\[
\begin{aligned}
  \wt{f}_a : \T[\Sigma^C]_a &\to \T[\Sigma^D]_{f(a)} \\
  w &\mapsto \wt{f}(w).
\end{aligned}
\]
\end{paragr}
Recall from Corollary \ref{corollary:truncationconduche} that for an $\omega$-functor $f : C \to D$ and $n>0$, $\tau_{\leq n}(f)$ is a discrete Conduché $n$-functor if and only if $f$ is right orthogonal to $\nabla^n_k$ for any $k \in \mathbb{N}$ such that $k<n$.
\begin{proposition}\label{propositionequivalentconditionsconduche}
  
  Let $f : C \to D$ be an $\omega$-functor and $n>0$. Then the following conditions are equivalent:
  \begin{enumerate}
    \item $\tau_{\leq n}(f): \tau_{\leq n}(C) \to \tau_{\leq n}(D)$ is a discrete Conduché $n$-functor,
    \item for every $\Sigma^D \subseteq D_{n}$ and $\Sigma^C:=f^{-1}(\Sigma^D)$ and for every $a \in C_{n}$ the map
  \[
    \wt{f}_a : \T[\Sigma^C]_a \to \T[\Sigma^D]_{f(a)}
    \]
    defined above is bijective.

    \end{enumerate}
\end{proposition}
\begin{proof}
  We begin with $1 \Rightarrow 2$.
  \begin{description}
  \item[Surjectivity] We are going to prove the following assertion:
    \[
    \begin{aligned}
      \forall l \in \mathbb{N}, \forall a \in C_{n}, &\forall w \in \T[\Sigma^D]_{f(a)} \text{ such that }|w|\leq l \\
      &\exists v \in \T[\Sigma^C]_a \text{ such that }\wt{f}_a(v)=w.
      \end{aligned}
    \]
    We proceed by induction on $l$.

    Suppose first that $l=0$. We are necessarily in one of the two cases:
    \begin{enumerate}
    \item $w=(\cc_{\beta})$ with $\beta \in \Sigma^D$. By hypothesis,
      $\rho_D(w)=f(a)$
      and by definition of $\rho_{D}$,
      $\rho_D(w)=\beta$
      thus $f(a)=\beta$.
      By definition of $\Sigma^C$, $a\in \Sigma^C$ and we can choose
      $v =(\cc_a) $.
    \item $w=(\ii_{y})$ with $y \in D_{n-1}$.
      By hypothesis, $\rho_D(w)=f(a)$ and by definition of $\rho_D$, $\rho_D(w)=1_y$ thus $f(a)=1_y$. Since $\tau_{\leq n}(f)$ is a discrete Conduché $n$-functor, $f$ is right orthogonal to $\kappa^{n}_{n-1}$. Hence, there exists $x \in C_{n-1}$ such that $a=1_x$ and $f(x)=y$. We can then choose $v=(\ii_x) \in \T[\Sigma^C]_a$.
      \end{enumerate}
    Now suppose that the assertion is true for a fixed $l \in \mathbb{N}$ and let $w \in \T[\Sigma^D]_{f(a)}$ be such that $|w|=l+1$.

    By definition of well formed words, we have
    \[
    w=(w_1\ast_k w_2)
    \]
    with $0 \leq k < n$ and  $w_1, w_2 \in \T[\Sigma^D]$ such that $|w_1|\leq l$ and $|w_2| \leq l$.

    By hypothesis, $\rho_D(w)=f(a)$ and by definition of $\rho_D$, \[\rho_D(w)=\rho_D(w_1)\ast_k \rho_D(w_2)\]
    thus,
    \[\rho_D(w_1)\ast_k\rho_D(w_2)=f(a).\]
    Since by hypothesis $f$ is right orthogonal to $\nabla^n_k$, we know that there exist $a_1 \in C_{n}$ and $a_2 \in C_{n}$ that are $k$-composable and such that $a=a_1\ast_ka_2$, $f(a_1)=\rho_D(w_1)$ and $f(a_2)=\rho_D(w_2)$.

    Since $|w_1|\leq l$ and $|w_2| \leq l$, we can apply the induction hypothesis. Hence, there exist $v_1 \in \T[\Sigma^C]_{a_1}$ and $v_2 \in \T[\Sigma^C]_{a_2}$ such that $\wt{f}_{a_1}(v_1)=\wt{f}(v_1)=w_1$ and $\wt{f}_{a_2}(v_2)=\wt{f}(v_2)=w_2$. Since $\rho_C$ commutes with source and target by Lemma \ref{lemmarho}, $v_1$ and $v_2$ are $k$-composable and the word $(v_1\ast_kv_2)$ is a well formed. By definition of $\rho_C$, we have
    \[\rho_C((v_1 \ast_k v_2))=\rho_C(v_1)\ast_k\rho_C(v_2)=a_1\ast_k a_2=a.\]
    Thus, $(v_1 \ast_k v_2) \in \T[\Sigma^C]_a$ and
    \[\wt{f}_a((v_1\ast_kv_2)=\wt{f}((v_1 \ast_k v_2))=(\wt{f}(v_1)\ast_k \wt{f}(v_2))=(w_1\ast_k w_2)=w.\]
  \item[Injectivity] We are going to prove the following assertion:
\[
\forall l \in \mathbb{N},\forall v \in \T[\Sigma^C]_a, w \in \T[\Sigma^C]_a \text { such that } |v|=|w|\leq l\]
\[
    \wt{f}_a(v)=\wt{f}_a(w) \Rightarrow v=w
\]
    We proceed by induction on $l$.

    Suppose first that $l=0$. We are necessarily in one of the four cases:
    \begin{enumerate}
    \item $v=(\cc_{\alpha})$ and $w=(\cc_{\beta})$ with $\alpha$ and $\beta$ in $\Sigma^C$. By definition of $\rho_C$, $\alpha=\rho_C(v)=a=\rho_C(w)=\beta$. Hence, $v=w$.
    \item $v=(\ii_x)$ and $w=(\ii_y)$ with $x$ and $y$ in $C_{n-1}$. By hypothesis $\rho_C(v)=a=\rho_C(w)$ and by definition of $\rho_C$, $1_x=\rho_C(v)=a=\rho_C(w)=1_y$, thus $x=y$ and $v=w$.
    \item $v=(\cc_{\alpha})$ and $w=(\ii_x)$ with $\alpha \in \Sigma^C$ and $x \in C_{n-1}$. By hypothesis, $(\cc_{f(\alpha)})=\wt{f}(v)=\wt{f}(w)=(\ii_{f(x)})$ which is impossible.
      \item $v=(\ii_x)$ and $w=(\cc_{\alpha})$ with $\alpha \in \Sigma^C$ and $x \in C_{n-1}$, which is symmetric to the previous case.
    \end{enumerate}
    Now suppose that the assertion is true for a fixed $l\in \mathbb{N}$ and let $v,w \in \T[\Sigma^C]$ such that $|v|=|w|=l+1$ and $\wt{f}(v)=\wt{f}(w)$.
    By definition of well formed words, we have
    \[
    v=(v_1 \ast_k v_2)
    \]
    and
    \[
    w=(w_1 \ast_{k'} w_2)
    \]
    with $|v_1|,|v_2|,|w_1|,|w_2|\leq l$.
    
    By hypothesis, we have \[
    (\wt{f}(v_1)\ast_k\wt{f}(v_2))=\wt{f}(v)=\wt{f}(w)=(\wt{f}(w_1)\ast_{k'}\wt{f}(w_2)).
    \]
    From Lemma \ref{lemmaunicitydecomposition}, we deduce that $\ast_k=\ast_{k'}$ and $\wt{f}(v_j)=\wt{f}(w_j)$ for $j \in \{1,2\}$.

    In order to apply the induction hypothesis, we need to show that $\rho_C(v_j)=\rho_C(w_j)$ for $j \in \{1,2\}$.

    By hypothesis,
    \[\rho_C(v_1)\ast_k\rho_C(v_2)=\rho_C(v)=a=\rho_C(w)=\rho_C(w_1)\ast_k\rho_C(w_2).\]
    Hence,
    \[f(\rho_C(v_1))\ast_kf(\rho_C(v_2))=f(a)=f(\rho_C(w_1))\ast_kf(\rho_C(w_2)).\]
    Besides, $f(\rho_C(v_j))=\rho_D(\wt{f}(v_j))=\rho_D(\wt{f}(w_j))=f(\rho_C(w_j))$. We deduce from the fact that $f$ is right orthogonal to $\nabla^n_k$ that
    \[\rho_C(v_j)=\rho_C(w_j)\]
    for $j \in \{1,2\}$.

    From the induction hypothesis we have $v_j=w_j$ for $j \in \{1,2\}$, hence $v=w$.
  \end{description}
  Now we prove $2 \Rightarrow 1$. 

  Let $a \in C_{n}$ and suppose that $f(a)=b_1\ast_k b_2$. We set $\Sigma^D=\{b_1,b_2\}$. By definition, $((\cc_{b_1})\ast_k (\cc_{b_2})) \in \T[\Sigma^D]_{f(a)}$ and by hypothesis there exists a unique $v \in \T[\Sigma^C]_{a}$ such that $\wt{f}_a(v)=((\cc_{b_1})\ast_k(\cc_{b_2}))$. Since $|\wt{f}_a(v)|=|v|=1$, we have
  \[
  v=(v_1 \ast_{k'} v_2)
  \]
  with $|v_1|=|v_2|=0$, $s^{k'}(v_1)=t^{k'}(v_2)$ and $0 \leq k' < n$. Thus,
  \[
  (\wt{f}(v_1)\ast_{k'}\wt{f}(v_2))=\wt{f}(v)=((\cc_{b_1})\ast_k(\cc_{b_2})).
  \]
  Using Lemma \ref{lemmaunicitydecomposition}, we deduce that $k=k'$ and $\wt{f}(v_j)=(\cc_{b_j})$ for $j\in \{1,2\}$.
  
  We set $a_1=\rho_C(v_1)$, $a_2=\rho_C(v_2)$ and we have $s^k(a_1)=t^k(a_2)$, \[a=\rho_C(v)=\rho_C(v_1)\ast_k \rho_C(v_2)=a_1\ast_k a_2\] and \[f(a_j)=f(\rho_C(v_j))=\rho_D(\wt{f}(v_j))=\rho_D(\cc_{b_j})=b_j\] for $j \in \{1,2\}$, which proves the existence part of the right orthogonality to $\nabla^n_k$.

  Now suppose that we have $a_1,a_1',a_2,a_2' \in C_{n}$ with $s^k(a_1)=t^k(a_2)$, $s^k(a_1')=t^k(a_2')$, $a_1\ast_k a_2 = a_1'\ast_k a_2'=a$, $f(a_1)=f(a_1')=b_1$ and $f(a_2)=f(a_2')=b_2$.

  By definition of $\Sigma^C=f^{-1}(\Sigma^D)$, we have $a_1,a_1',a_2,a_2' \in \Sigma^C$. We set $w=((\cc_{a_1})\ast_k(\cc_{a_2}))$ and $w'=((\cc_{a_1'})\ast_k(\cc_{a_2'}))$. We have $\rho_C(w)=\rho_C(w')=a$ and $\wt{f}(w)=((\cc_{b_1})\ast_k(\cc_{b_2}))=\wt{f}(w')$. The injectivity of $\wt{f}_a$ implies that $w=w'$, hence $a_1=a_1'$ and $a_2=a_2'$ which proves the uniqueness part of the right orthogonality to $\nabla^n_k$. 
\end{proof}
\begin{paragr}\label{paragr:imageelemmove}
  Let $f : C \to D$ be an $\omega$-functor, $n > 0$, $\Sigma^C \subseteq C_{n}$ and $\Sigma^D \subseteq D_{n}$ such that $f(\Sigma^C)\subseteq \Sigma^D$. It follows from the definition of $\wt{f} : \T[\Sigma^C] \to \T[\Sigma^D]$ and the definition of elementary move (\ref{definitionelementarymove}) that for an elementary move
  \[
  \mu = (v,w,e,e') : u \to u'
  \]
  with $u,u' \in \T[\Sigma^C]$, the quadruple
  \[
  (\wt{f}(v),\wt{f}(w),\wt{f}(e),\wt{f}(e'))
  \]
  is an elementary move from $\wt{f}(u)$ to $\wt{f}(u')$. Thus, we have defined a map
  \[
  \G[\Sigma^C](u,u') \to \G[\Sigma^D](\wt{f}(u),\wt{f}(u')).
  \]
  Together with the map $\wt{f} : \T[\Sigma^C] \to \T[\Sigma^D]$, this defines a morphism of graphs
  \[
  \wt{f} : \G[\Sigma^C] \to \G[\Sigma^D]
  \]
  and, by restriction, a morphism of graphs
    \[
  \wt{f}_a : \G[\Sigma^C]_a \to \G[\Sigma^D]_{f(a)}
  \]
  for any $a \in C_{n}$.
\end{paragr}
\begin{lemma}\label{lemmafaithful} With the notations of the above paragraph, the map
  \[
  \G[\Sigma^C](u,u') \to \G[\Sigma^D](\wt{f}(u),\wt{f}(u'))
  \]
  is injective.
\end{lemma}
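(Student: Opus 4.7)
My plan is to exploit the fact that $\wt{f}$ preserves the length $\mathcal{L}$ of a word. Let $\mu_1 = (v_1, w_1, e_1, e_1')$ and $\mu_2 = (v_2, w_2, e_2, e_2')$ be two elementary movements from $u$ to $u'$ in $\G[\Sigma^C]$, and suppose their images under $\wt{f}$ coincide, i.e.\
\[
\wt{f}(v_1) = \wt{f}(v_2), \quad \wt{f}(w_1) = \wt{f}(w_2), \quad \wt{f}(e_1) = \wt{f}(e_2), \quad \wt{f}(e_1') = \wt{f}(e_2').
\]
I want to conclude that $\mu_1 = \mu_2$.

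The key observation, already noted in the paragraph preceding Lemma~\ref{lemmamapinducedonwords}, is that $\mathcal{L}(\wt{f}(w)) = \mathcal{L}(w)$ for every $w \in \W[\Sigma^C]$. Applying this to each of the four equalities above, I obtain
\[
\mathcal{L}(v_1) = \mathcal{L}(v_2), \quad \mathcal{L}(w_1) = \mathcal{L}(w_2), \quad \mathcal{L}(e_1) = \mathcal{L}(e_2), \quad \mathcal{L}(e_1') = \mathcal{L}(e_2').
\]

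Since both movements have the same source $u$, I have the equality of concatenated words $v_1 e_1 w_1 = u = v_2 e_2 w_2$ in $\W[\Sigma^C]$. Concatenation in the free monoid of words on the alphabet is cancellative in the obvious way: two decompositions of the same word into three factors whose respective lengths match must coincide factor by factor. Combined with $\mathcal{L}(v_1) = \mathcal{L}(v_2)$ and $\mathcal{L}(e_1) = \mathcal{L}(e_2)$ (which forces $\mathcal{L}(w_1) = \mathcal{L}(w_2)$ automatically), I conclude $v_1 = v_2$, $e_1 = e_2$, and $w_1 = w_2$. Applying the same argument to the common target $u' = v_1 e_1' w_1 = v_2 e_2' w_2$, together with $\mathcal{L}(e_1') = \mathcal{L}(e_2')$, yields $e_1' = e_2'$. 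Therefore $\mu_1 = \mu_2$, which proves the injectivity.

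There is no substantial obstacle here: all the content is in the length-preservation property of $\wt{f}$, which makes the decompositions $u = vew$ recoverable from the lengths of $v$ and $e$. Note in particular that we do \emph{not} need $\wt{f}$ to be injective on individual symbols (it is not, whenever $f$ identifies two generators of $\Sigma^C$); the decompositions are pinned down purely by the fact that $u$ and $u'$ are already fixed on the source side.
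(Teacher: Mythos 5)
Your proof is correct and follows essentially the same route as the paper's: both arguments use the fact that $\wt{f}$ preserves the length $\mathcal{L}$ of words, so that the two factorizations $v_ie_iw_i$ of the fixed word $u$ (and likewise of $u'$) must agree factor by factor. The paper's proof is just a terser version of yours; there is nothing to add or correct.
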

  \begin{proof}
    Let $(v_1,w_1,e_1,e_1')$ and $(v_2,w_2,e_2,e_2')$ be two elementary moves from $u$ to $u'$ such that
  \[
   (\wt{f}(v_1),\wt{f}(w_1),\wt{f}(e_1),\wt{f}(e_1'))= (\wt{f}(v_2),\wt{f}(w_2),\wt{f}(e_2),\wt{f}(e_2')).
  \]
 In particular, we have
  \[
  \mathcal{L}(v_1)=\mathcal{L}(v_2) \text{ , }  \mathcal{L}(w_1)=\mathcal{L}(w_2) \text{ , }  \mathcal{L}(e_1)=\mathcal{L}(e_2) \text{ , }  \mathcal{L}(e_1')=\mathcal{L}(e_2'). 
  \]
  Since
  \[
  v_1e_1w_1=u=v_2e_2w_2 \text{ and } v_1e_1'w_1=u'=v_2e_2'w_2,
  \]
  we have
  \[
    v_1=v_2 \text{ , }  w_1=w_2 \text{ , }  e_1=e_2 \text{ , }  e_1'=e_2'. \qedhere
  \]
  \end{proof}
  \begin{lemma}\label{technicallemma}
    With the notations of paragraph \ref{paragr:imageelemmove}, suppose that $\tau_{\leq n-1}(f)$ is a discrete Conduché $(n-1)$-functor. Let
  \[
  \mu : v \to v'
  \]
  be an elementary move in $\T[\Sigma^D]$. If there exists $u \in \T[\Sigma^C]$ such that
  \[
  \wt{f}(u) = v
  \]
  then there exists $u' \in \T[\Sigma^C]$ and an elementary move
  \[
  \lambda : u \to u'
  \]
  such that
  \[
  \wt{f}(u')=v' \text{ and } \wt{f}(\lambda)=\mu.
  \]
\end{lemma}
\begin{proof}
  The proof is long and tedious as we have to check all the different cases of elementary moves. For the sake of clarity, we first outline a sketch of the proof that is common to all the cases of elementary moves and then we proceed to fill in the blanks successively for each case.

 Let
  \[
  \mu =(v_1,v_2,e,e') : v \to v'
  \]
  be an elementary move. Since, by definition,
  \[\wt{f}(u)= v = v_1ev_2\]
  $u$ is necessarily of the form
  \[
  u=u_1\overline{e}u_2
  \]
  with $\overline{e},u_1,u_2 \in \W[E]$ such that
  \[
  \wt{f}(\overline{e})=e
  \]
    and
    \[
     \wt{f}(u_j)=v_j
     \]
     for $j \in \{1,2\}$.
     From the second part of Lemma \ref{lemmamapinducedonwords}, we deduce that $\overline{e}$ is well formed. In each different case, we will prove the existence of a well formed word $\overline{e'}$ parallel to $\overline{e}$ and such that
     \[
     \wt{f}(\overline{e'})=e'.
     \]
     From Corollary \ref{corollarysubwordsubstitution}, we deduce that the word
     \[
     \overline{u'}:=u_1\overline{e'}u_2
     \]
     is well formed. By definition, we have
     \[
     \wt{f}(u')=v'.
     \]
     Moreover, in each case, it will be immediate that the pair $(\overline{e},\overline{e'})$ is such that the quadruple
     \[
     \lambda := (u_1,u_2,\overline{e},\overline{e'})
     \]
    is an elementary move and that
     \[
     \wt{f}(\lambda)=\mu.
     \]

  All that is left now is to prove the existence of $\overline{e'}$ with the desired properties.

    \begin{description}
    \item[First case] $e$ is of the form
         \[((x\ast_k y )\ast_k z)\]  
      and $e'$ is of the form
       \[(x\ast_k (y \ast_k z))\]
      with $x,y,z \in \T[\Sigma^D]$.
      The word $\overline{e}$ is then necessarily of the form
      \[
      ((\overline{x}\ast_k \overline{y}) \ast_k \overline{z}).
      \]
      Since $\wt{f}(\overline{e})=e$, we deduce from Lemma \ref{lemmamapinducedonwords} that $\overline{x}$, $\overline{y}$, $\overline{z}$ and $(\overline{x} \ast_k \overline{y})$ are well formed. From Corollary \ref{corollarycompatiblesubwords}, we deduce that
           \[
     s^k(\overline{x})=t^k(\overline{y})
     \]
     and
          \[
     s^k(\overline{y})=t^k(\overline{z}).
     \]
     Thus, the word
     \[
     \overline{e'}:=(\overline{x}\ast_k(\overline{y}\ast_k\overline{z}))
     \]
    is well formed and it satisfies the desired properties.
  \item[Second case]
    $e$ is of the form
    \[
    (x\ast_k(\ii_{1^{n-1}_k(z)}))
    \]
    and $e'$ is of the form
    \[
    x
    \]
    with $x \in \T[\Sigma^C]$, $0\leq k < n$ and $z \in D_k$.\footnote{Notice that since $e$ is well formed, we deduce from Corollary \ref{corollarycompatiblesubwords} that $z=s^k(x)$.}
    
    Necessarily $\overline{e}$ is of the form
    \[
     (\overline{x}\ast_k(\ii_{y}))
    \]
    with $\overline{x} \in \T[E]$ (from  Lemma \ref{lemmamapinducedonwords} again) and $y \in C_{n-1}$ such that
    \[
    \wt{f}(\overline{x})=x
    \]
    and
    \[
    \wt{f}(y)=1^{n-1}_k(z).
    \]
    Then we set
    \[
    \overline{e'}:=x.
    \]
    The only thing left to show is that $y=1^{n-1}_k(s^k(\overline{x}))$. If $k=n-1$, this follows from Corollary \ref{corollarycompatiblesubwords} and the fact that $\overline{e}$ is well formed.
If $k <n-1$, we need first to use the fact that $f$ is right orthogonal to $\kappa^{n-1}_k$ to deduce that 
    \[
    y=1^{n-1}_k(\overline{z})
    \]
    for some $\overline{z} \in C_k$ such that $f(\overline{z})=z$ and then use Corollary \ref{corollarycompatiblesubwords} and the fact that $\overline{e}$ is well formed.
    \item[Third case] Similar to the second one with unit on the left.
  \item[Fourth case] $e$ is of the form
    \[
    ((\ii_x) \ast_k (\ii_y))
    \]
    and $e'$ is of the form
    \[
    (\ii_{x\ast_k y})
    \]
    with $x,y \in D_{n-1}$ such that $s^k(x)=t^k(y)$.
    Necessarily, $\overline{e}$ is of the form
    \[
    ((\ii_{\overline{x}})\ast_k(\ii_{\overline{y}}))
      \]
      with $\overline{x},\overline{y} \in C_n$ such that
      \[
      f(\overline{x})=x \text{ and } f(\overline{y})=y.
      \]
      Using Corollary \ref{corollarycompatiblesubwords} and the fact that $e$ is well formed, we deduce that $s^k(\overline{x})=t^k(\overline{y})$. Thus, the word
      \[
      \overline{e'}:=(\ii_{\overline{x} \ast_k \overline{y}})
      \]
      is well formed. It satisfies all the desired properties.
    \item[Fifth case] $e$ is of the form
      \[
      ((x\ast_ky)\ast_l(z \ast_k t))
      \]
      and $e'$ is of the form
            \[
      ((x\ast_lz)\ast_k(y \ast_l t))
      \]
      with $x,y,z,t \in \T[\Sigma^D]$ and $0 \leq l  < k < n$ such that all the compatibilities of sources and targets needed are satisfied.  

      Necessarily, $\overline{e}$ is of the form
      \[
      ((\overline{x}\ast_k \overline{y})\ast_l (\overline{z} \ast_k \overline{t}))
      \]
      with $\overline{x},\overline{y},\overline{z},\overline{t} \in \W[\Sigma^C]$ such that
      \[
          \wt{f}(\overline{x})=x,
    \]
    \[
    \wt{f}(\overline{y})=y,
    \]
    \[
    \wt{f}(\overline{z})=z,
    \]
    \[
    \wt{f}(\overline{t})=t.
    \]
    From Lemma \ref{lemmamapinducedonwords} and the fact that $\overline{e}$ is well formed, we deduce that $\overline{x}, \overline{y}, \overline{z}, \overline{t},(\overline{x}\ast_k \overline{y}),(\overline{z} \ast_k \overline{t})$ are well formed and from Corollary \ref{corollarycompatiblesubwords}, we deduce that
         \[
     s^k(\overline{x})=t^k(\overline{y}),
     \]
     \[
     s^k(\overline{z})=t^k(\overline{t})
     \]
     and
     \[
     s^l((\overline{x}\ast_k\overline{y}))=t^l((\overline{z}\ast_k\overline{t})).
     \]
     Since $l<k$, we deduce from this last equality that
     \[
     s^l(\overline{x})=s^l(\overline{y})=t^l(\overline{z})=t^l(\overline{t}).
     \]
     Thus, the word
     \[
     \overline{e'}:=((\overline{x}\ast_l\overline{z})\ast_k(\overline{y} \ast_l \overline{t}))
     \]
     is well formed. It satisfies all the desired properties.\qedhere
    \end{description}
  \end{proof}
  \begin{remark}
    In the proof of the previous theorem, we only used the hypothesis that $f$ is right orthogonal to $\kappa_k^n$ for any $k$ such that $0 \leq k < n-1$.
    \end{remark}

\begin{corollary}\label{corollaryisomorphismgraphs}
  Let $f : C \to D$ be an $\omega$-functor, $n>0$, $\Sigma^D \subseteq D_{n}$ and $\Sigma^C = f^{-1}(\Sigma^D)$. If $\tau_{\leq n}(f)$ is a discrete Conduché $n$-functor, then for every $a \in C_{n}$
  \[
  \wt{f}_a : \G[\Sigma^C]_a\to \G[\Sigma^D]_{f(a)}
  \]
  is an isomorphism of graphs.
\end{corollary}
\begin{proof}
 Proposition \ref{propositionequivalentconditionsconduche} exactly says that the map
    \[
  \wt{f}_a : \G[\Sigma^C]_a\to \G[\Sigma^D]_{f(a)}
  \]
  is an isomorphism on objects and we know from Lemma \ref{lemmafaithful} that it is a faithful morphism of graphs (same definition as for functors). All that is left to show is that it is also full.

  In other words, we have to show that for any $u,u' \in \T[\Sigma^C]_a$ the map
  \[
  \G[\Sigma^C](u,u') \to \G[\Sigma^D](\wt{f}(u),\wt{f}(u'))
  \]
  is surjective.

  Let $\mu : \wt{f}(u) \to \wt{f}(u')$ be an element of the codomain. From Lemma \ref{technicallemma} we know that there exists
  \[\lambda : u \to v\]
  in $\G[\Sigma^C]$ such that
  \[\wt{f}(\lambda)=\mu.\]
  In particular, we have
  \[
  \wt{f}(v)=\wt{f}(u').
  \]
  Since we have an elementary move from $u$ to $v$ and by hypothesis ${u \in \T[\Sigma^C]_a}$, we also have $v \in \T[\Sigma^C]_a$ (see \ref{paragr:fiberrho}). Using the injectivity of the map
  \[
  \wt{f}_a : \T[\Sigma^C]_a \to \T[\Sigma^D]_{f(a)}
  \]
  we conclude that $v=u'$.
\end{proof}
  
\begin{proposition}
  Let $f : C \to D$ be an $\omega$-functor, $n \in \mathbb{N}$, $\Sigma^D \subseteq D_{n}$ and $\Sigma^C = f^{-1}(\Sigma^D)$. If $\tau_{\leq n}(f)$ is a discrete Conduché $n$-functor, then:
  \begin{enumerate}
  \item if $\Sigma^D$ is an $n$-basis then so is $\Sigma^C$,
    \item if $f_{n} : C_{n}\to D_{n}$ is surjective and $\Sigma^C$ is an $n$-basis then so is $\Sigma^D$.
    \end{enumerate}
\end{proposition}
\begin{proof}The case $n=0$ is trivial. We know suppose that $n>0$.
  From Corollary \ref{corollaryisomorphismgraphs} we have that for every $a \in C_n$, the map
    \[
    \wt{f}_a : \G[\Sigma^C]_a \to \G[\Sigma^D]_{f(a)}
    \]
    is an isomorphism of graphs. In particular, $\G[\Sigma^C]_a$ is $0$-connected if and only if $\G[\Sigma^D]_{f(a)}$ is $0$-connected. We conclude with Proposition \ref{propositionbasis0connected}.
    \end{proof}
 \begin{theorem}\label{maintheorem}
   Let $f : C \to D$ be a discrete Conduché $\omega$-functor.
   \begin{enumerate}
   \item If $D$ is a free $\omega$-category with basis $(\Sigma^D_n)_{n \in \mathbb{N}}$, then $C$ is a free $\omega$-category with basis $(f^{-1}(\Sigma^D_n))_{n \in \mathbb{N}}$ .
   \item If for every $n \in \mathbb{N}$, $f_n : C_n \to D_n$ is surjective and if $C$ is a free $\omega$-category with basis $(\Sigma^C_n)_{n \in \mathbb{N}}$, then $D$ is a free $\omega$-category with basis $(f(\Sigma^C_n))_{n \in \mathbb{N}}$.
   \end{enumerate}
   \end{theorem}

\begin{proof}
  The first property follow directly from the previous proposition. For the second property, it follows from Lemma \ref{lemmaconducheindecomposablecells} and Proposition \ref{propositionbasisindecomposablecells} that
  \[
 \Sigma^C_n=f^{-1}(f(\Sigma^C_n))
  \]
  and then we can use the previous proposition.
\end{proof}
\appendix
\section{Complements: rigid functors and discrete Conduch\'e $\omega$-functors}\label{appendixrigid}
\begin{paragr}
  We know from Proposition \ref{prop:conducheimpliesrigid} that if
  \[
  f : C \to D
  \]
  is a discrete Conduché $\omega$-functor and if $C$ and $D$ are free $\omega$-categories then $f$ is rigid. However, the converse does not hold. This phenomenon was already noticed for $2$-categories in \cite[section 5]{street1996categorical}. We shall now give a simple counter-example.
\end{paragr}
\begin{counterexample} Let $e$ be the terminal $1$-category and let $\star$ be its unique object. Let $E=(e,\Sigma,\sigma,\tau)$ be the cellular extension of $e$ such that $\Sigma$ has two elements $a,b : \star \to \star$ and let $C:=E^{\ast}$. By the Eckmann-Hilton argument, we have that
  \[
  a \ast_0 b = a \ast_1 b = b \ast_0 a,
  \]
  and $C$ is (isomorphic to) the free \emph{commutative} monoid generated by $a$ and $b$, seen as a 2-category.

  Let $E'=(e,\Sigma',\sigma,\tau)$ be the cellular extension of $e$ such that $\Sigma'$ has one element $ c : \star \to \star$ and let $C':=E'^{\ast}$. Then $C'$ is (isomorphic to) the free commutative monoid generated by $c$, seen as a 2-category.
  Let $f : C \to C'$ be the unique rigid functor such that $f(a)=f(b)=c$. Now set $x:=a \ast_0 b$ and consider the decomposition
  \[
  f(x)=c\ast_0 c.
  \]
  The fact that $a\ast_0 b = b \ast_0 a$ but that $a \neq b$ shows that the uniqueness of the lifting of the previous decomposition of $f(x)$ fails.
\end{counterexample}
\begin{remark}
 While, in the previous counter-example, the uniqueness part of the definition of discrete Conduché $2$-functor fails, the existence still holds. I believe that there should be examples where the existence part fails as well.
\end{remark}
\begin{paragr}
  The category $\Pol$ admits a terminal object $\top$. Hence, a rigid $\omega$-functor $f : C \to D$ between two free $\omega$-categories always fits in a commutative triangle
  \[
  \begin{tikzcd}
    C \ar[rr,"f"] \ar[dr] &&D \ar[dl]\\
    &\top&
    \end{tikzcd}
  \]
  where the anonymous arrows are the canonical rigid functors to the terminal polygraph. Since the class of discrete Conduché $\omega$-functors is a right orthogonal class, it has the following cancellation property: for $f : C \to D$ and $g : D \to E$ two $\omega$-functors, if $g$ and $g \circ f$ are Conduché $\omega$-functors then so is $f$.

  Following the terminology of \cite[section 5]{street1996categorical}, we say that a free $\omega$-category $C$ is \emph{tight} if the canonical rigid functor $C \to \top$ is a Conduché $\omega$-functor. Putting all the pieces together, we obtain the following partial converse of Proposition \ref{prop:conducheimpliesrigid}.  
\end{paragr}
\begin{proposition}\label{prop:partialconverse1}
  Let $C$ and $D$ be two free $\omega$-categories and $f : C \to D$ a rigid $\omega$-functor. If $C$ and $D$ are tight then $f$ is a discrete Conduché $\omega$-functor.
\end{proposition}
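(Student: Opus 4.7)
The plan is to invoke the cancellation property for right orthogonal classes stated in the paragraph immediately preceding the proposition, applied to the commutative triangle
\[
\begin{tikzcd}
    C \ar[rr,"f"] \ar[dr,"p_C"'] &&D \ar[dl,"p_D"]\\
    &\top&
\end{tikzcd}
\]
where $p_C : C \to \top$ and $p_D : D \to \top$ are the canonical rigid $\omega$-functors to the terminal polygraph. Commutativity of the triangle is automatic since $\top$ is terminal in $\Pol$ (and hence, viewing rigid functors as $\omega$-functors, also terminal among rigid $\omega$-functors out of free $\omega$-categories). So the first step is just to record the existence of this triangle.

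Next, I would observe that by the hypothesis that $C$ and $D$ are tight, both $p_C$ and $p_D$ are discrete Conduché $\omega$-functors by definition. This is the crucial input from the tightness assumption and uses no further structure.

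Finally, I would apply the cancellation property: since the class of discrete Conduché functors is a right orthogonal class, it is closed under the cancellation rule that if $g$ and $g \circ f$ are in the class then so is $f$. Taking $g = p_D$ and $g \circ f = p_C$, both of which lie in the class, we conclude that $f$ itself is a discrete Conduché functor.

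There is no genuine obstacle here; the proof is a one-line application of cancellation once the triangle is in place. The only mild point worth being careful about is that the cancellation property must indeed be a formal consequence of being a right orthogonal class (which it is, and which is recalled just above the proposition), so no extra verification is needed beyond citing the preceding discussion.
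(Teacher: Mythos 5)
Your proof is correct and follows exactly the argument the paper intends: form the commutative triangle over the terminal polygraph $\top$, use tightness to see that both canonical functors $C \to \top$ and $D \to \top$ are discrete Conduché, and conclude by the cancellation property of right orthogonal classes. This is precisely what the paper means by ``putting all the pieces together,'' so there is nothing to add.
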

\begin{paragr}
  The terminal object of $\Pol$ is a rather complicated object (see \cite[section 4]{street1996categorical} for an explicit description of the 2-cells of that polygraph) and the previous criterion seems hard to use in practise.

  However, it can be checked that every free $1$-category is tight and the previous proposition implies that a $1$-functor $f : C \to D$ between free 1-categories is rigid if and only if it is a Conduché functor. This fact can also be directly proved ``by hand''. We leave the details to the reader.
\end{paragr}

\bibliographystyle{alpha}
\bibliography{fibcond}
\end{document}